\newtheorem{thm}{Theorem}[section]
\newtheorem{cor}[thm]{Corollary}
\theoremstyle{plain}
\theoremstyle{remark}
\def\dated#1{\def\thedate{#1}}%
 \newdimen\xydashw@@
\newdimen\high%
\newdimen\ul%
\newdimen\wdth%
\def\ratchet#1#2{\ifnum#1<#2\global #1=#2\fi}%
\def\ifnextchar#1#2#3{\let\@tempe%
#1\def\@tempa{#2}\def\@tempb{#3}\futurelet%
    \@tempc\@ifnch}%
\def\@ifnch{\ifx \@tempc \@sptoken \let\@tempd\@xifnch%
      \else \ifx \@tempc \@tempe\let\@tempd\@tempa\else\let\@tempd\@tempb\fi%
      \fi \@tempd}%
\def\:{\let\@sptoken= } \:  
\def\:{\@xifnch} \expandafter\def\: {\futurelet\@tempc\@ifnch}%
\let\ifnextchar\@ifnextchar%
\newdimen\axis \axis=\fontdimen22\textfont2%
\def\scalefactor#1{\ul=#1\ul \X@xbase=#1\X@xbase \Y@ybase=#1\Y@ybase}%
\def\fontscale#1{%
\if#1h\relax%
\font\xydashfont=xydash10 scaled \magstephalf%
\font\xyatipfont=xyatip10 scaled \magstephalf%
\font\xybtipfont=xybtip10 scaled \magstephalf%
\font\xybsqlfont=xybsql10 scaled \magstephalf%
\font\xycircfont=xycirc10 scaled \magstephalf%
\else%
\font\xydashfont=xydash10 scaled \magstep#1%
\font\xyatipfont=xyatip10 scaled \magstep#1%
\font\xybtipfont=xybtip10 scaled \magstep#1%
\font\xybsqlfont=xybsql10 scaled \magstep#1%
\font\xycircfont=xycirc10 scaled \magstep#1%
\fi}%
\def\bfig{\vcenter\bgroup\xy}%
\def\efig{\endxy\egroup}%
\def\car#1#2\nil{#1}%
\def\morphism{\ifnextchar({\morphismp}{\morphismp(0,0)}}%
\def\morphismp(#1){\ifnextchar|{\morphismpp(#1)}{\morphismpp(#1)|a|}}%
\def\morphismpp(#1)|#2|{\ifnextchar/{\morphismppp(#1)|#2|}%
    {\morphismppp(#1)|#2|/>/}}%
\def\morphismppp(#1)|#2|/#3/{%
    \ifnextchar<{\morphismpppp(#1)|#2|/#3/}%
    {\morphismpppp(#1)|#2|/#3/<\default,0>}}%
\def\morphismpppp(#1,#2)|#3|/#4/<#5,#6>[#7`#8;#9]{%
\xend#1\advance \xend by #5%
\yend#2\advance \yend by #6%
\domorphism(#1,#2)|#3|/#4/<#5,#6>[{#7}`{#8};{#9}]}%
\def\domorphism(#1,#2)|#3|/#4/<#5,#6>[#7`#8;#9]{%
\def\next{\car#4.\nil}%
\if@\next\relax%
 \if#3l%
  \ifnum #6>0%
   \POS(#1,#2)*+!!<0ex,\axis>{#7}\ar#4^-{#9} (\xend,\yend)*+!!<0ex,\axis>{#8}%
  \else%
   \POS(#1,#2)*+!!<0ex,\axis>{#7}\ar#4_-{#9} (\xend,\yend)*+!!<0ex,\axis>{#8}%
  \fi%
 \else \if#3m%
    \setbox0\hbox{$#9$}%
   \ifdim \wd0=0pt%
     \POS(#1,#2)*+!!<0ex,\axis>{#7}\ar#4 (\xend,\yend)*+!!<0ex,\axis>{#8}%
   \else%
     \POS(#1,#2)*+!!<0ex,\axis>{#7}\ar#4|-*+<1pt,4pt>{\labelstyle#9}%
       (\xend,\yend)*+!!<0ex,\axis>{#8}%
   \fi%
 \else \if#3r%
  \ifnum #6<0%
   \POS(#1,#2)*+!!<0ex,\axis>{#7}\ar#4^-{#9} (\xend,\yend)*+!!<0ex,\axis>{#8}%
  \else%
   \POS(#1,#2)*+!!<0ex,\axis>{#7}\ar#4_-{#9} (\xend,\yend)*+!!<0ex,\axis>{#8}%
  \fi%
 \else \if#3a%
  \ifnum #5>0%
   \POS(#1,#2)*+!!<0ex,\axis>{#7}\ar#4^-{#9} (\xend,\yend)*+!!<0ex,\axis>{#8}%
  \else%
   \POS(#1,#2)*+!!<0ex,\axis>{#7}\ar#4_-{#9} (\xend,\yend)*+!!<0ex,\axis>{#8}%
  \fi%
 \else \if#3b%
  \ifnum #5<0%
   \POS(#1,#2)*+!!<0ex,\axis>{#7}\ar#4^-{#9} (\xend,\yend)*+!!<0ex,\axis>{#8}%
  \else%
   \POS(#1,#2)*+!!<0ex,\axis>{#7}\ar#4_-{#9} (\xend,\yend)*+!!<0ex,\axis>{#8}%
  \fi%
 \else%
   \POS(#1,#2)*+!!<0ex,\axis>{#7}\ar#4 (\xend,\yend)*+!!<0ex,\axis>{#8}%
 \fi\fi\fi\fi\fi%
\else%
 \if#3l%
  \ifnum #6>0%
   \POS(#1,#2)*+!!<0ex,\axis>{#7}\ar@{#4}^-{#9} (\xend,\yend)*+!!<0ex,\axis>{#8}%
  \else%
   \POS(#1,#2)*+!!<0ex,\axis>{#7}\ar@{#4}_-{#9} (\xend,\yend)*+!!<0ex,\axis>{#8}%
  \fi%
 \else \if#3m%
    \setbox0\hbox{$#9$}%
   \ifdim \wd0=0pt%
     \POS(#1,#2)*+!!<0ex,\axis>{#7}\ar@{#4} (\xend,\yend)*+!!<0ex,\axis>{#8}%
   \else%
     \POS(#1,#2)*+!!<0ex,\axis>{#7}\ar@{#4}|-*+<1pt,4pt>{\labelstyle#9}%
         (\xend,\yend)*+!!<0ex,\axis>{#8}%
   \fi%
 \else \if#3r%
  \ifnum #6<0%
   \POS(#1,#2)*+!!<0ex,\axis>{#7}\ar@{#4}^-{#9} (\xend,\yend)*+!!<0ex,\axis>{#8}%
  \else%
   \POS(#1,#2)*+!!<0ex,\axis>{#7}\ar@{#4}_-{#9} (\xend,\yend)*+!!<0ex,\axis>{#8}%
  \fi%
 \else \if#3a%
  \ifnum #5>0%
   \POS(#1,#2)*+!!<0ex,\axis>{#7}\ar@{#4}^-{#9} (\xend,\yend)*+!!<0ex,\axis>{#8}%
  \else%
   \POS(#1,#2)*+!!<0ex,\axis>{#7}\ar@{#4}_-{#9} (\xend,\yend)*+!!<0ex,\axis>{#8}%
  \fi%
 \else \if#3b%
  \ifnum #5<0%
   \POS(#1,#2)*+!!<0ex,\axis>{#7}\ar@{#4}^-{#9} (\xend,\yend)*+!!<0ex,\axis>{#8}%
  \else%
   \POS(#1,#2)*+!!<0ex,\axis>{#7}\ar@{#4}_-{#9} (\xend,\yend)*+!!<0ex,\axis>{#8}%
  \fi%
 \else%
   \POS(#1,#2)*+!!<0ex,\axis>{#7}\ar@{#4} (\xend,\yend)*+!!<0ex,\axis>{#8}%
 \fi\fi\fi\fi\fi%
\fi\ignorespaces}%
\def\vect(#1,#2)/#3/<#4,#5>{%
 \xend#1 \yend#2 \advance\xend by #4 \advance\yend by #5%
     \POS(#1,#2)\ar#3 (\xend,\yend)}%
\def\squarepppp(#1,#2)|#3|/#4`#5`#6`#7/<#8>[#9]{%
\xpos#1\ypos#2%
\def\next|##1##2##3##4|{%
 \def\xa{##1}\def\xb{##2}\def\xc{##3}\def\xd{##4}\ignorespaces}%
\next|#3|%
\def\next<##1,##2>{\deltax=##1\deltay=##2\ignorespaces}%
\next<#8>%
\def\next[##1`##2`##3`##4;##5`##6`##7`##8]{%
    \def\nodea{##1}\def\nodeb{##2}\def\nodec{##3}\def\noded{##4}%
    \def\labela{##5}\def\labelb{##6}\def\labelc{##7}\def\labeld{##8}\ignorespaces}%
\next[#9]%
\morphism(\xpos,\ypos)|\xd|/{#7}/<\deltax,0>[\nodec`\noded;\labeld]%
\advance \ypos by \deltay%
\morphism(\xpos,\ypos)|\xb|/{#5}/<0,-\deltay>[\nodea`\nodec;\labelb]%
\morphism(\xpos,\ypos)|\xa|/{#4}/<\deltax,0>[\nodea`\nodeb;\labela]%
 \advance \xpos by \deltax%
\morphism(\xpos,\ypos)|\xc|/{#6}/<0,-\deltay>[\nodeb`\noded;\labelc]%
\ignorespaces}%
\def\square{\ifnextchar({\squarep}{\squarep(0,0)}}%
\def\squarep(#1){\ifnextchar|{\squarepp(#1)}{\squarepp(#1)|alrb|}}%
\def\squarepp(#1)|#2|{\ifnextchar/{\squareppp(#1)|#2|}%
    {\squareppp(#1)|#2|/>`>`>`>/}}%
\def\squareppp(#1)|#2|/#3`#4`#5`#6/{%
    \ifnextchar<{\squarepppp(#1)|#2|/#3`#4`#5`#6/}%
    {\squarepppp(#1)|#2|/#3`#4`#5`#6/<\default,\default>}}%
\def\diamondpppp(#1,#2)|#3|/#4`#5`#6`#7/<#8>[#9]{%
\xpos#1\ypos#2%
\def\next|##1##2##3##4|{%
 \def\xa{##1}\def\xb{##2}\def\xc{##3}\def\xd{##4}\ignorespaces}%
\next|#3|%
\def\next<##1,##2>{\deltax=##1\deltay=##2\ignorespaces}%
\next<#8>%
\def\next[##1`##2`##3`##4;##5`##6`##7`##8]{%
    \def\nodea{##1}\def\nodeb{##2}\def\nodec{##3}\def\noded{##4}%
    \def\labela{##5}\def\labelb{##6}\def\labelc{##7}%
\def\labeld{##8}\ignorespaces}%
\next[#9]%
\advance\ypos\deltay
\morphism(\xpos,\ypos)|\xc|/{#6}/<\deltax,-\deltay>[\nodeb`\noded;\labelc]%
\advance\xpos \deltax
\advance\xpos \deltax
\morphism(\xpos,\ypos)|\xd|/{#7}/<-\deltax,-\deltay>[\nodec`\noded;\labeld]%
\advance\ypos\deltay \advance\xpos -\deltax
\morphism(\xpos,\ypos)|\xa|/{#4}/<-\deltax,-\deltay>[\nodea`\nodeb;\labela]%
\morphism(\xpos,\ypos)|\xb|/{#5}/<\deltax,-\deltay>[\nodea`\nodec;\labelb]%
}
\def\diamondp(#1){\ifnextchar|{\diamondpp(#1)}{\diamondpp(#1)|lrlr|}}%
\def\diamondpp(#1)|#2|{\ifnextchar/{\diamondppp(#1)|#2|}%
    {\diamondppp(#1)|#2|/>`>`>`>/}}%
\def\diamondppp(#1)|#2|/#3`#4`#5`#6/{%
    \ifnextchar<{\diamondpppp(#1)|#2|/#3`#4`#5`#6/}%
    {\diamondpppp(#1)|#2|/#3`#4`#5`#6/<400,400>}}%
\def\ptrianglepppp(#1,#2)|#3|/#4`#5`#6/<#7>[#8]{%
\xpos#1\ypos#2%
\def\next|##1##2##3|{\def\xa{##1}\def\xb{##2}\def\xc{##3}}%
\next|#3|%
\def\next<##1,##2>{\deltax=##1\deltay=##2\ignorespaces}%
\next<#7>%
\def\next[##1`##2`##3;##4`##5`##6]{%
    \def\nodea{##1}\def\nodeb{##2}\def\nodec{##3}%
    \def\labela{##4}\def\labelb{##5}\def\labelc{##6}}%
\next[#8]%
\advance\ypos by \deltay%
\morphism(\xpos,\ypos)|\xa|/{#4}/<\deltax,0>[\nodea`\nodeb;\labela]%
\morphism(\xpos,\ypos)|\xb|/{#5}/<0,-\deltay>[\nodea`\nodec;\labelb]%
\advance\xpos by \deltax%
\morphism(\xpos,\ypos)|\xc|/{#6}/<-\deltax,-\deltay>[\nodeb`\nodec;\labelc]%
\ignorespaces}%
\def\qtrianglepppp(#1,#2)|#3|/#4`#5`#6/<#7>[#8]{%
\xpos#1\ypos#2%
\def\next|##1##2##3|{\def\xa{##1}\def\xb{##2}\def\xc{##3}}%
\next|#3|%
\def\next<##1,##2>{\deltax=##1\deltay=##2\ignorespaces}%
\next<#7>%
\def\next[##1`##2`##3;##4`##5`##6]{%
    \def\nodea{##1}\def\nodeb{##2}\def\nodec{##3}%
    \def\labela{##4}\def\labelb{##5}\def\labelc{##6}}%
\next[#8]%
\advance\ypos by \deltay%
\morphism(\xpos,\ypos)|\xa|/{#4}/<\deltax,0>[\nodea`\nodeb;\labela]%
\morphism(\xpos,\ypos)|\xb|/{#5}/<\deltax,-\deltay>[\nodea`\nodec;\labelb]%
\advance\xpos by \deltax%
\morphism(\xpos,\ypos)|\xc|/{#6}/<0,-\deltay>[\nodeb`\nodec;\labelc]%
\ignorespaces}%
\def\dtrianglepppp(#1,#2)|#3|/#4`#5`#6/<#7>[#8]{%
\xpos#1\ypos#2%
\def\next|##1##2##3|{\def\xa{##1}\def\xb{##2}\def\xc{##3}}%
\next|#3|%
\def\next<##1,##2>{\deltax=##1\deltay=##2\ignorespaces}%
\next<#7>%
\def\next[##1`##2`##3;##4`##5`##6]{%
    \def\nodea{##1}\def\nodeb{##2}\def\nodec{##3}%
    \def\labela{##4}\def\labelb{##5}\def\labelc{##6}}%
\next[#8]%
\morphism(\xpos,\ypos)|\xc|/{#6}/<\deltax,0>[\nodeb`\nodec;\labelc]%
\advance\ypos by \deltay\advance \xpos by \deltax%
\morphism(\xpos,\ypos)|\xa|/{#4}/<-\deltax,-\deltay>[\nodea`\nodeb;\labela]%
\morphism(\xpos,\ypos)|\xb|/{#5}/<0,-\deltay>[\nodea`\nodec;\labelb]%
\ignorespaces}%
\def\btrianglepppp(#1,#2)|#3|/#4`#5`#6/<#7>[#8]{%
\xpos#1\ypos#2%
\def\next|##1##2##3|{\def\xa{##1}\def\xb{##2}\def\xc{##3}}%
\next|#3|%
\def\next<##1,##2>{\deltax=##1\deltay=##2\ignorespaces}%
\next<#7>%
\def\next[##1`##2`##3;##4`##5`##6]{%
    \def\nodea{##1}\def\nodeb{##2}\def\nodec{##3}%
    \def\labela{##4}\def\labelb{##5}\def\labelc{##6}}%
\next[#8]%
\morphism(\xpos,\ypos)|\xc|/{#6}/<\deltax,0>[\nodeb`\nodec;\labelc]%
\advance\ypos by \deltay%
\morphism(\xpos,\ypos)|\xa|/{#4}/<0,-\deltay>[\nodea`\nodeb;\labela]%
\morphism(\xpos,\ypos)|\xb|/{#5}/<\deltax,-\deltay>[\nodea`\nodec;\labelb]%
\ignorespaces}%
\def\Atrianglepppp(#1,#2)|#3|/#4`#5`#6/<#7>[#8]{%
\xpos#1\ypos#2%
\def\next|##1##2##3|{\def\xa{##1}\def\xb{##2}\def\xc{##3}}%
\next|#3|%
\def\next<##1,##2>{\deltax=##1\deltay=##2\ignorespaces}%
\next<#7>%
\def\next[##1`##2`##3;##4`##5`##6]{%
    \def\nodea{##1}\def\nodeb{##2}\def\nodec{##3}%
    \def\labela{##4}\def\labelb{##5}\def\labelc{##6}}%
\next[#8]%
\multiply\deltax by 2%
\morphism(\xpos,\ypos)|\xc|/{#6}/<\deltax,0>[\nodeb`\nodec;\labelc]%
\divide\deltax by 2%
\advance\ypos by \deltay\advance\xpos by \deltax%
\morphism(\xpos,\ypos)|\xa|/{#4}/<-\deltax,-\deltay>[\nodea`\nodeb;\labela]%
\morphism(\xpos,\ypos)|\xb|/{#5}/<\deltax,-\deltay>[\nodea`\nodec;\labelb]%
\ignorespaces}%
\def\Vtrianglepppp(#1,#2)|#3|/#4`#5`#6/<#7>[#8]{%
\xpos#1\ypos#2%
\def\next|##1##2##3|{\def\xa{##1}\def\xb{##2}\def\xc{##3}}%
\next|#3|%
\def\next<##1,##2>{\deltax=##1\deltay=##2\ignorespaces}%
\next<#7>%
\def\next[##1`##2`##3;##4`##5`##6]{%
    \def\nodea{##1}\def\nodeb{##2}\def\nodec{##3}%
    \def\labela{##4}\def\labelb{##5}\def\labelc{##6}}%
\next[#8]%
\advance\ypos by \deltay%
\morphism(\xpos,\ypos)|\xb|/{#5}/<\deltax,-\deltay>[\nodea`\nodec;\labelb]%
\multiply\deltax by 2%
\morphism(\xpos,\ypos)|\xa|/{#4}/<\deltax,0>[\nodea`\nodeb;\labela]%
\advance\xpos by \deltax \divide \deltax by 2%
\morphism(\xpos,\ypos)|\xc|/{#6}/<-\deltax,-\deltay>[\nodeb`\nodec;\labelc]%
\ignorespaces}%
\def\Ctrianglepppp(#1,#2)|#3|/#4`#5`#6/<#7>[#8]{%
\xpos#1\ypos#2%
\def\next|##1##2##3|{\def\xa{##1}\def\xb{##2}\def\xc{##3}}%
\next|#3|%
\def\next<##1,##2>{\deltax=##1\deltay=##2\ignorespaces}%
\next<#7>%
\def\next[##1`##2`##3;##4`##5`##6]{%
    \def\nodea{##1}\def\nodeb{##2}\def\nodec{##3}%
    \def\labela{##4}\def\labelb{##5}\def\labelc{##6}}%
\next[#8]%
\advance \ypos by \deltay%
\morphism(\xpos,\ypos)|\xc|/{#6}/<\deltax,-\deltay>[\nodeb`\nodec;\labelc]%
\advance\ypos by \deltay \advance \xpos by \deltax%
\morphism(\xpos,\ypos)|\xa|/{#4}/<-\deltax,-\deltay>[\nodea`\nodeb;\labela]%
\multiply\deltay by 2%
\morphism(\xpos,\ypos)|\xb|/{#5}/<0,-\deltay>[\nodea`\nodec;\labelb]%
\ignorespaces}%
\def\Dtrianglepppp(#1,#2)|#3|/#4`#5`#6/<#7>[#8]{%
\xpos#1\ypos#2%
\def\next|##1##2##3|{\def\xa{##1}\def\xb{##2}\def\xc{##3}}%
\next|#3|%
\def\next<##1,##2>{\deltax=##1\deltay=##2\ignorespaces}%
\next<#7>%
\def\next[##1`##2`##3;##4`##5`##6]{%
    \def\nodea{##1}\def\nodeb{##2}\def\nodec{##3}%
    \def\labela{##4}\def\labelb{##5}\def\labelc{##6}}%
\next[#8]%
\advance\xpos by \deltax \advance\ypos by \deltay%
\morphism(\xpos,\ypos)|\xc|/{#6}/<-\deltax,-\deltay>[\nodeb`\nodec;\labelc]%
\advance\xpos by -\deltax \advance\ypos by \deltay%
\morphism(\xpos,\ypos)|\xb|/{#5}/<\deltax,-\deltay>[\nodea`\nodeb;\labelb]%
\multiply \deltay by 2%
\morphism(\xpos,\ypos)|\xa|/{#4}/<0,-\deltay>[\nodea`\nodec;\labela]%
\ignorespaces}%
\def\ptrianglep(#1){\ifnextchar|{\ptrianglepp(#1)}{\ptrianglepp(#1)|alr|}}%
\def\ptrianglepp(#1)|#2|{\ifnextchar/{\ptriangleppp(#1)|#2|}%
    {\ptriangleppp(#1)|#2|/>`>`>/}}%
\def\ptriangleppp(#1)|#2|/#3`#4`#5/{%
    \ifnextchar<{\ptrianglepppp(#1)|#2|/#3`#4`#5/}%
    {\ptrianglepppp(#1)|#2|/#3`#4`#5/<\default,\default>}}%
\def\qtrianglep(#1){\ifnextchar|{\qtrianglepp(#1)}{\qtrianglepp(#1)|alr|}}%
\def\qtrianglepp(#1)|#2|{\ifnextchar/{\qtriangleppp(#1)|#2|}%
    {\qtriangleppp(#1)|#2|/>`>`>/}}%
\def\qtriangleppp(#1)|#2|/#3`#4`#5/{%
    \ifnextchar<{\qtrianglepppp(#1)|#2|/#3`#4`#5/}%
    {\qtrianglepppp(#1)|#2|/#3`#4`#5/<\default,\default>}}%
\def\dtrianglep(#1){\ifnextchar|{\dtrianglepp(#1)}{\dtrianglepp(#1)|lrb|}}%
\def\dtrianglepp(#1)|#2|{\ifnextchar/{\dtriangleppp(#1)|#2|}%
    {\dtriangleppp(#1)|#2|/>`>`>/}}%
\def\dtriangleppp(#1)|#2|/#3`#4`#5/{%
    \ifnextchar<{\dtrianglepppp(#1)|#2|/#3`#4`#5/}%
    {\dtrianglepppp(#1)|#2|/#3`#4`#5/<\default,\default>}}%
\def\btrianglep(#1){\ifnextchar|{\btrianglepp(#1)}{\btrianglepp(#1)|lrb|}}%
\def\btrianglepp(#1)|#2|{\ifnextchar/{\btriangleppp(#1)|#2|}%
    {\btriangleppp(#1)|#2|/>`>`>/}}%
\def\btriangleppp(#1)|#2|/#3`#4`#5/{%
    \ifnextchar<{\btrianglepppp(#1)|#2|/#3`#4`#5/}%
    {\btrianglepppp(#1)|#2|/#3`#4`#5/<\default,\default>}}%
\def\Atrianglep(#1){\ifnextchar|{\Atrianglepp(#1)}{\Atrianglepp(#1)|lrb|}}%
\def\Atrianglepp(#1)|#2|{\ifnextchar/{\Atriangleppp(#1)|#2|}%
    {\Atriangleppp(#1)|#2|/>`>`>/}}%
\def\Atriangleppp(#1)|#2|/#3`#4`#5/{%
    \ifnextchar<{\Atrianglepppp(#1)|#2|/#3`#4`#5/}%
    {\Atrianglepppp(#1)|#2|/#3`#4`#5/<\default,\default>}}%
\def\Vtrianglep(#1){\ifnextchar|{\Vtrianglepp(#1)}{\Vtrianglepp(#1)|alb|}}%
\def\Vtrianglepp(#1)|#2|{\ifnextchar/{\Vtriangleppp(#1)|#2|}%
    {\Vtriangleppp(#1)|#2|/>`>`>/}}%
\def\Vtriangleppp(#1)|#2|/#3`#4`#5/{%
    \ifnextchar<{\Vtrianglepppp(#1)|#2|/#3`#4`#5/}%
    {\Vtrianglepppp(#1)|#2|/#3`#4`#5/<\default,\default>}}%
\def\Ctrianglep(#1){\ifnextchar|{\Ctrianglepp(#1)}{\Ctrianglepp(#1)|arb|}}%
\def\Ctrianglepp(#1)|#2|{\ifnextchar/{\Ctriangleppp(#1)|#2|}%
    {\Ctriangleppp(#1)|#2|/>`>`>/}}%
\def\Ctriangleppp(#1)|#2|/#3`#4`#5/{%
    \ifnextchar<{\Ctrianglepppp(#1)|#2|/#3`#4`#5/}%
    {\Ctrianglepppp(#1)|#2|/#3`#4`#5/<\default,\default>}}%
\def\Dtrianglep(#1){\ifnextchar|{\Dtrianglepp(#1)}{\Dtrianglepp(#1)|lab|}}%
\def\Dtrianglepp(#1)|#2|{\ifnextchar/{\Dtriangleppp(#1)|#2|}%
    {\Dtriangleppp(#1)|#2|/>`>`>/}}%
\def\Dtriangleppp(#1)|#2|/#3`#4`#5/{%
    \ifnextchar<{\Dtrianglepppp(#1)|#2|/#3`#4`#5/}%
    {\Dtrianglepppp(#1)|#2|/#3`#4`#5/<\default,\default>}}%
\def\Atrianglepairpppp(#1)|#2|/#3`#4`#5`#6`#7/<#8>[#9]{%
\def\next(##1,##2){\xpos##1\ypos##2}%
\next(#1)%
\def\next|##1##2##3##4##5|{\def\xa{##1}\def\xb{##2}%
\def\xc{##3}\def\xd{##4}\def\xe{##5}}%
\next|#2|%
\def\next<##1,##2>{\deltax=##1\deltay=##2\ignorespaces}%
\next<#8>%
\def\next[##1`##2`##3`##4;##5`##6`##7`##8`##9]{%
 \def\nodea{##1}\def\nodeb{##2}\def\nodec{##3}\def\noded{##4}%
 \def\labela{##5}\def\labelb{##6}\def\labelc{##7}\def\labeld{##8}\def\labele{##9}}%
\next[#9]%
\morphism(\xpos,\ypos)|\xd|/{#6}/<\deltax,0>[\nodeb`\nodec;\labeld]%
\advance\xpos by \deltax%
\morphism(\xpos,\ypos)|\xe|/{#7}/<\deltax,0>[\nodec`\noded;\labele]%
\advance\ypos by \deltay%
\morphism(\xpos,\ypos)|\xa|/{#3}/<-\deltax,-\deltay>[\nodea`\nodeb;\labela]%
\morphism(\xpos,\ypos)|\xb|/{#4}/<0,-\deltay>[\nodea`\nodec;\labelb]%
\morphism(\xpos,\ypos)|\xc|/{#5}/<\deltax,-\deltay>[\nodea`\noded;\labelc]%
\ignorespaces}%
\def\Vtrianglepairpppp(#1)|#2|/#3`#4`#5`#6`#7/<#8>[#9]{%
\def\next(##1,##2){\xpos##1\ypos##2}%
\next(#1)%
\def\next|##1##2##3##4##5|{\def\xa{##1}\def\xb{##2}%
\def\xc{##3}\def\xd{##4}\def\xe{##5}}%
\next|#2|%
\def\next<##1,##2>{\deltax=##1\deltay=##2\ignorespaces}%
\next<#8>%
\def\next[##1`##2`##3`##4;##5`##6`##7`##8`##9]{%
 \def\nodea{##1}\def\nodeb{##2}\def\nodec{##3}\def\noded{##4}%
 \def\labela{##5}\def\labelb{##6}\def\labelc{##7}\def\labeld{##8}\def\labele{##9}}%
\next[#9]%
\advance\ypos by \deltay%
\morphism(\xpos,\ypos)|\xa|/{#3}/<\deltax,0>[\nodea`\nodeb;\labela]%
\morphism(\xpos,\ypos)|\xc|/{#5}/<\deltax,-\deltay>[\nodea`\noded;\labelc]%
\advance\xpos by \deltax%
\morphism(\xpos,\ypos)|\xb|/{#4}/<\deltax,0>[\nodeb`\nodec;\labelb]%
\morphism(\xpos,\ypos)|\xd|/{#6}/<0,-\deltay>[\nodeb`\noded;\labeld]%
\advance\xpos by \deltax%
\morphism(\xpos,\ypos)|\xe|/{#7}/<-\deltax,-\deltay>[\nodec`\noded;\labele]%
\ignorespaces}%
\def\Ctrianglepairpppp(#1)|#2|/#3`#4`#5`#6`#7/<#8>[#9]{%
\def\next(##1,##2){\xpos##1\ypos##2}%
\next(#1)%
\def\next|##1##2##3##4##5|{\def\xa{##1}\def\xb{##2}%
\def\xc{##3}\def\xd{##4}\def\xe{##5}}%
\next|#2|%
\def\next<##1,##2>{\deltax=##1\deltay=##2\ignorespaces}%
\next<#8>%
\def\next[##1`##2`##3`##4;##5`##6`##7`##8`##9]{%
 \def\nodea{##1}\def\nodeb{##2}\def\nodec{##3}\def\noded{##4}%
 \def\labela{##5}\def\labelb{##6}\def\labelc{##7}\def\labeld{##8}\def\labele{##9}}%
\next[#9]%
\advance\ypos by \deltay%
\morphism(\xpos,\ypos)|\xe|/{#7}/<0,-\deltay>[\nodec`\noded;\labele]%
\advance\xpos by -\deltax%
\morphism(\xpos,\ypos)|\xc|/{#5}/<\deltax,0>[\nodeb`\nodec;\labelc]%
\morphism(\xpos,\ypos)|\xd|/{#6}/<\deltax,-\deltay>[\nodeb`\noded;\labeld]%
\advance\ypos by \deltay%
\advance\xpos by \deltax%
\morphism(\xpos,\ypos)|\xa|/{#3}/<-\deltax,-\deltay>[\nodea`\nodeb;\labela]%
\morphism(\xpos,\ypos)|\xb|/{#4}/<0,-\deltay>[\nodea`\nodec;\labelb]%
\ignorespaces}%
\def\Dtrianglepairpppp(#1)|#2|/#3`#4`#5`#6`#7/<#8>[#9]{%
\def\next(##1,##2){\xpos##1\ypos##2}%
\next(#1)%
\def\next|##1##2##3##4##5|{\def\xa{##1}\def\xb{##2}%
\def\xc{##3}\def\xd{##4}\def\xe{##5}}%
\next|#2|%
\def\next<##1,##2>{\deltax=##1\deltay=##2\ignorespaces}%
\next<#8>%
\def\next[##1`##2`##3`##4;##5`##6`##7`##8`##9]{%
 \def\nodea{##1}\def\nodeb{##2}\def\nodec{##3}\def\noded{##4}%
 \def\labela{##5}\def\labelb{##6}\def\labelc{##7}\def\labeld{##8}\def\labele{##9}}%
\next[#9]%
\advance\ypos by \deltay%
\morphism(\xpos,\ypos)|\xc|/{#5}/<\deltax,0>[\nodeb`\nodec;\labelc]%
\morphism(\xpos,\ypos)|\xd|/{#6}/<0,-\deltay>[\nodeb`\noded;\labeld]%
\advance\ypos by \deltay%
\morphism(\xpos,\ypos)|\xa|/{#3}/<0,-\deltay>[\nodea`\nodeb;\labela]%
\morphism(\xpos,\ypos)|\xb|/{#4}/<\deltax,-\deltay>[\nodea`\nodec;\labelb]%
\advance\ypos by -\deltay%
\advance\xpos by \deltax%
\morphism(\xpos,\ypos)|\xe|/{#7}/<-\deltax,-\deltay>[\nodec`\noded;\labele]%
\ignorespaces}%
\def\Atrianglepairp(#1){\ifnextchar|{\Atrianglepairpp(#1)}%
{\Atrianglepairpp(#1)|lmrbb|}}%
\def\Atrianglepairpp(#1)|#2|{\ifnextchar/{\Atrianglepairppp(#1)|#2|}%
    {\Atrianglepairppp(#1)|#2|/>`>`>`>`>/}}%
\def\Atrianglepairppp(#1)|#2|/#3`#4`#5`#6`#7/{%
    \ifnextchar<{\Atrianglepairpppp(#1)|#2|/#3`#4`#5`#6`#7/}%
    {\Atrianglepairpppp(#1)|#2|/#3`#4`#5`#6`#7/<\default,\default>}}%
\def\Vtrianglepairp(#1){\ifnextchar|{\Vtrianglepairpp(#1)}%
{\Vtrianglepairpp(#1)|aalmr|}}%
\def\Vtrianglepairpp(#1)|#2|{\ifnextchar/{\Vtrianglepairppp(#1)|#2|}%
    {\Vtrianglepairppp(#1)|#2|/>`>`>`>`>/}}%
\def\Vtrianglepairppp(#1)|#2|/#3`#4`#5`#6`#7/{%
    \ifnextchar<{\Vtrianglepairpppp(#1)|#2|/#3`#4`#5`#6`#7/}%
    {\Vtrianglepairpppp(#1)|#2|/#3`#4`#5`#6`#7/<\default,\default>}}%
\def\Ctrianglepairp(#1){\ifnextchar|{\Ctrianglepairpp(#1)}%
{\Ctrianglepairpp(#1)|lrmlr|}}%
\def\Ctrianglepairpp(#1)|#2|{\ifnextchar/{\Ctrianglepairppp(#1)|#2|}%
    {\Ctrianglepairppp(#1)|#2|/>`>`>`>`>/}}%
\def\Ctrianglepairppp(#1)|#2|/#3`#4`#5`#6`#7/{%
    \ifnextchar<{\Ctrianglepairpppp(#1)|#2|/#3`#4`#5`#6`#7/}%
    {\Ctrianglepairpppp(#1)|#2|/#3`#4`#5`#6`#7/<\default,\default>}}%
\def\Dtrianglepairp(#1){\ifnextchar|{\Dtrianglepairpp(#1)}%
{\Dtrianglepairpp(#1)|lrmlr|}}%
\def\Dtrianglepairpp(#1)|#2|{\ifnextchar/{\Dtrianglepairppp(#1)|#2|}%
    {\Dtrianglepairppp(#1)|#2|/>`>`>`>`>/}}%
\def\Dtrianglepairppp(#1)|#2|/#3`#4`#5`#6`#7/{%
    \ifnextchar<{\Dtrianglepairpppp(#1)|#2|/#3`#4`#5`#6`#7/}%
    {\Dtrianglepairpppp(#1)|#2|/#3`#4`#5`#6`#7/<\default,\default>}}%
\def\pplace[#1](#2,#3)[#4]{\POS(#2,#3)*+!!<0ex,\axis>!#1{#4}\ignorespaces}%
\def\cplace(#1,#2)[#3]{\POS(#1,#2)*+!!<0ex,\axis>{#3}\ignorespaces}%
\def\place{\ifnextchar[{\pplace}{\cplace}}%
\def\pullback#1]#2]{\square#1]\trident#2]\ignorespaces}%
\def\tridentppp|#1#2#3|/#4`#5`#6/<#7,#8>[#9]{%
\def\next[##1;##2`##3`##4]{\def\nodee{##1}\def\labele{##2}%
   \def\labelf{##3}\def\labelg{##4}}%
\next[#9]%
\advance \xpos by -\deltax%
\advance \xpos by -#7\advance \ypos by #8%
\advance\deltax by #7%
\morphism(\xpos,\ypos)|#1|/{#4}/<\deltax,-#8>[\nodee`\nodeb;\labele]%
\advance\deltax by -#7%
\morphism(\xpos,\ypos)|#2|/{#5}/<#7,-#8>[\nodee`\nodea;\labelf]%
\advance\deltay by #8%
\morphism(\xpos,\ypos)|#3|/{#6}/<#7,-\deltay>[\nodee`\nodec;\labelg]%
\ignorespaces}%
\def\trident{\ifnextchar|{\tridentp}{\tridentp|amb|}}%
\def\tridentp|#1|{\ifnextchar/{\tridentpp|#1|}{\tridentpp|#1|/{>}`{>}`{>}/}}%
\def\tridentpp|#1|/#2/{\ifnextchar<{\tridentppp|#1|/#2/}%
  {\tridentppp|#1|/#2/<500,500>}}%
\def\setmorphismwidth#1#2#3#4{%
 \setbox0=\hbox{$#1{\labelstyle#3#3}#2$}#4=\wd0%
 \divide #4 by 2 \divide #4 by \ul%
 \advance #4 by 350 \ratchet{#4}{500}}%
\def\setSquarewidth[#1`#2`#3`#4;#5`#6`#7`#8]{%
 \setmorphismwidth{#1}{#2}{#5}{\topw}%
 \setmorphismwidth{#3}{#4}{#8}{\botw}%
\ratchet{\topw}{\botw}}%
\def\Squarepppp(#1)|#2|/#3/<#4>[#5]{%
 \setSquarewidth[#5]%
 \squarepppp(#1)|#2|/#3/<\topw,#4>[#5]%
\ignorespaces}%
\def\Squarep(#1){\ifnextchar|{\Squarepp(#1)}{\Squarepp(#1)|alrb|}}%
\def\Squarepp(#1)|#2|{\ifnextchar/{\Squareppp(#1)|#2|}%
    {\Squareppp(#1)|#2|/>`>`>`>/}}%
\def\Squareppp(#1)|#2|/#3`#4`#5`#6/{%
    \ifnextchar<{\Squarepppp(#1)|#2|/#3`#4`#5`#6/}%
    {\Squarepppp(#1)|#2|/#3`#4`#5`#6/<\default>}}%
\def\hsquarespppp(#1,#2)|#3|/#4/<#5>[#6;#7]{%
\Xpos=#1\Ypos=#2%
\def\next|##1##2##3##4##5##6##7|{%
 \def\Xa{##1}\def\Xb{##2}\def\Xc{##3}\def\Xd{##4}%
 \def\Xe{##5}\def\Xf{##6}\def\Xg{##7}}%
\next|#3|%
\def\next<##1,##2,##3>{\deltaX=##1\deltaXprime=##2\deltaY=##3}%
\next<#5>%
\def\next[##1`##2`##3`##4`##5`##6]{%
 \def\Nodea{##1}\def\Nodeb{##2}\def\Nodec{##3}%
 \def\Noded{##4}\def\Nodee{##5}\def\Nodef{##6}}%
\next[#6]%
\def\next[##1`##2`##3`##4`##5`##6`##7]{%
 \def\Labela{##1}\def\Labelb{##2}\def\Labelc{##3}\def\Labeld{##4}%
 \def\Labele{##5}\def\Labelf{##6}\def\Labelg{##7}}%
\next[#7]%
\dohsquares/#4/}%
\def\dohsquares/#1`#2`#3`#4`#5`#6`#7/{%
\squarepppp(\Xpos,\Ypos)|\Xa\Xc\Xd\Xf|/#1`#3`#4`#6/<\deltaX,\deltaY>%
 [\Nodea`\Nodeb`\Noded`\Nodee;\Labela`\Labelc`\Labeld`\Labelf]%
 \advance \Xpos by \deltaX%
\squarepppp(\Xpos,\Ypos)|\Xb\Xd\Xe\Xg|/#2``#5`#7/<\deltaXprime,\deltaY>%
[\Nodeb`\Nodec`\Nodee`\Nodef;\Labelb``\Labele`\Labelg]%
\ignorespaces}%
\def\hsquaresp(#1){\ifnextchar|{\hsquarespp(#1)}{\hsquarespp%
(#1)|aalmrbb|}}%
\def\hsquarespp(#1)|#2|{\ifnextchar/{\hsquaresppp(#1)|#2|}%
    {\hsquaresppp(#1)|#2|/>`>`>`>`>`>`>/}}%
\def\hsquaresppp(#1)|#2|/#3/{%
    \ifnextchar<{\hsquarespppp(#1)|#2|/#3/}%
    {\hsquarespppp(#1)|#2|/#3/<\default,\default,\default>}}%
\def\hSquarespppp(#1,#2)|#3|/#4/<#5>[#6;#7]{%
\Xpos=#1\Ypos=#2%
\def\next|##1##2##3##4##5##6##7|{%
 \def\Xa{##1}\def\Xb{##2}\def\Xc{##3}\def\Xd{##4}%
 \def\Xe{##5}\def\Xf{##6}\def\Xg{##7}}%
\next|#3|%
\deltaY=#5%
\def\next[##1`##2`##3`##4`##5`##6]{%
 \def\Nodea{##1}\def\Nodeb{##2}\def\Nodec{##3}%
 \def\Noded{##4}\def\Nodee{##5}\def\Nodef{##6}}%
\next[#6]%
\def\next[##1`##2`##3`##4`##5`##6`##7]{%
 \def\Labela{##1}\def\Labelb{##2}\def\Labelc{##3}\def\Labeld{##4}%
 \def\Labele{##5}\def\Labelf{##6}\def\Labelg{##7}}%
\next[#7]%
\dohSquares/#4/}%
\def\dohSquares/#1`#2`#3`#4`#5`#6`#7/{%
\Squarepppp(\Xpos,\Ypos)|\Xa\Xc\Xd\Xf|/#1`#3`#4`#6/<\deltaY>%
 [\Nodea`\Nodeb`\Noded`\Nodee;\Labela`\Labelc`\Labeld`\Labelf]%
 \advance \Xpos by \topw%
\Squarepppp(\Xpos,\Ypos)|\Xb\Xd\Xe\Xg|/#2``#5`#7/<\deltaY>%
[\Nodeb`\Nodec`\Nodee`\Nodef;\Labelb``\Labele`\Labelg]%
\ignorespaces}%
\def\hSquaresp(#1){\ifnextchar|{\hSquarespp(#1)}{\hSquarespp%
(#1)|aalmrbb|}}%
\def\hSquarespp(#1)|#2|{\ifnextchar/{\hSquaresppp(#1)|#2|}%
    {\hSquaresppp(#1)|#2|/>`>`>`>`>`>`>/}}%
\def\hSquaresppp(#1)|#2|/#3/{%
    \ifnextchar<{\hSquarespppp(#1)|#2|/#3/}%
    {\hSquarespppp(#1)|#2|/#3/<\default>}}%
\def\vsquarespppp(#1,#2)|#3|/#4/<#5>[#6;#7]{%
\Xpos=#1\Ypos=#2%
\def\next|##1##2##3##4##5##6##7|{%
 \def\Xa{##1}\def\Xb{##2}\def\Xc{##3}\def\Xd{##4}%
 \def\Xe{##5}\def\Xf{##6}\def\Xg{##7}}%
\next|#3|%
\def\next<##1,##2,##3>{\deltaX=##1\deltaY=##2\deltaYprime=##3}%
\next<#5>%
\def\next[##1`##2`##3`##4`##5`##6]{%
 \def\Nodea{##1}\def\Nodeb{##2}\def\Nodec{##3}%
 \def\Noded{##4}\def\Nodee{##5}\def\Nodef{##6}}%
\next[#6]%
\def\next[##1`##2`##3`##4`##5`##6`##7]{%
 \def\Labela{##1}\def\Labelb{##2}\def\Labelc{##3}\def\Labeld{##4}%
 \def\Labele{##5}\def\Labelf{##6}\def\Labelg{##7}}%
\next[#7]%
\dovsquares/#4/}%
\def\dovsquares/#1`#2`#3`#4`#5`#6`#7/{%
\squarepppp(\Xpos,\Ypos)|\Xd\Xe\Xf\Xg|/`#5`#6`#7/<\deltaX,\deltaYprime>%
[\Nodec`\Noded`\Nodee`\Nodef;`\Labele`\Labelf`\Labelg]%
 \advance\Ypos by \deltaYprime%
\squarepppp(\Xpos,\Ypos)|\Xa\Xb\Xc\Xd|/#1`#2`#3`#4/<\deltaX,\deltaY>%
 [\Nodea`\Nodeb`\Nodec`\Noded;\Labela`\Labelb`\Labelc`\Labeld]%
\ignorespaces}%
\def\vsquaresp(#1){\ifnextchar|{\vsquarespp(#1)}{\vsquarespp%
(#1)|aalmrbb|}}%
\def\vsquarespp(#1)|#2|{\ifnextchar/{\vsquaresppp(#1)|#2|}%
    {\vsquaresppp(#1)|#2|/>`>`>`>`>`>`>/}}%
\def\vsquaresppp(#1)|#2|/#3/{%
    \ifnextchar<{\vsquarespppp(#1)|#2|/#3/}%
    {\vsquarespppp(#1)|#2|/#3/<\default,\default,\default>}}%
\def\vSquarespppp(#1,#2)|#3|/#4/<#5,#6>[#7;#8]{%
\Xpos=#1\Ypos=#2%
\def\next|##1##2##3##4##5##6##7|{%
 \def\Xa{##1}\def\Xb{##2}\def\Xc{##3}\def\Xd{##4}%
 \def\Xe{##5}\def\Xf{##6}\def\Xg{##7}}%
\next|#3|%
\deltaX=#5%
\deltaY=#6%
\def\next[##1`##2`##3`##4`##5`##6]{%
 \def\Nodea{##1}\def\Nodeb{##2}\def\Nodec{##3}%
 \def\Noded{##4}\def\Nodee{##5}\def\Nodef{##6}}%
\next[#7]%
\def\next[##1`##2`##3`##4`##5`##6`##7]{%
 \def\Labela{##1}\def\Labelb{##2}\def\Labelc{##3}\def\Labeld{##4}%
 \def\Labele{##5}\def\Labelf{##6}\def\Labelg{##7}}%
\next[#8]%
\dovSquares/#4/\ignorespaces}%
\def\dovSquares/#1`#2`#3`#4`#5`#6`#7/{%
\setmorphismwidth{\Nodea}{\Nodeb}{\Labela}{\topw}%
\setmorphismwidth{\Nodec}{\Noded}{\Labeld}{\botw}%
\ratchet{\topw}{\botw}%
\setmorphismwidth{\Nodee}{\Nodef}{\Labelg}{\botw}%
\ratchet{\topw}{\botw}%
\square(\Xpos,\Ypos)|\Xd\Xe\Xf\Xg|/`#5`#6`#7/<\topw,\deltaY>%
 [\Nodec`\Noded`\Nodee`\Nodef;`\Labele`\Labelf`\Labelg]%
\advance \Ypos by \deltaY%
\square(\Xpos,\Ypos)|\Xa\Xb\Xc\Xd|/#1`#2`#3`#4/<\topw,\deltaX>%
 [\Nodea`\Nodeb`\Nodec`\Noded;\Labela`\Labelb`\Labelc`\Labeld]%
}%
\def\vSquaresp(#1){\ifnextchar|{\vSquarespp(#1)}{\vSquarespp%
(#1)|alrmlrb|}}%
\def\vSquarespp(#1)|#2|{\ifnextchar/{\vSquaresppp(#1)|#2|}%
    {\vSquaresppp(#1)|#2|/>`>`>`>`>`>`>/}}%
\def\vSquaresppp(#1)|#2|/#3/{%
    \ifnextchar<{\vSquarespppp(#1)|#2|/#3/}%
    {\vSquarespppp(#1)|#2|/#3/<\default,\default>}}%
\def\osquarepppp(#1)|#2|/#3`#4`#5`#6/<#7>[#8]{\squarepppp%
 (#1)|#2|/#3`#4`#5`#6/<#7>[#8]%
 \let\Nodea\nodea\let\Nodeb\nodeb%
\let\Nodec\nodec\let\Noded\noded\Xpos=\xpos\Ypos=\ypos%
\deltaX=\deltax \deltaY=\deltay \isquare}%
\def\osquarep(#1){\ifnextchar|{\osquarepp(#1)}{\osquarepp(#1)|alrb|}}%
\def\osquarepp(#1)|#2|{\ifnextchar/{\osquareppp(#1)|#2|}%
    {\osquareppp(#1)|#2|/>`>`>`>/}}%
\def\osquareppp(#1)|#2|/#3`#4`#5`#6/{%
    \ifnextchar<{\osquarepppp(#1)|#2|/#3`#4`#5`#6/}%
    {\osquarepppp(#1)|#2|/#3`#4`#5`#6/<1500,1500>}}%
\def\isquarepppp(#1)|#2|/#3`#4`#5`#6/<#7>[#8]{%
 \squarepppp(#1)|#2|/#3`#4`#5`#6/<#7>[#8]%
\ifnextchar|{\cubep}{\cubep|mmmm|}}%
\def\cubep|#1|{\ifnextchar/{\cubepp|#1|}{\cubepp|#1|/>`>`>`>/}}%
\def\isquare{\ifnextchar({\isquarep}{\isquarep(\default,\default)}}%
\def\isquarep(#1){\ifnextchar|{\isquarepp(#1)}{\isquarepp(#1)|alrb|}}%
\def\isquarepp(#1)|#2|{\ifnextchar/{\isquareppp(#1)|#2|}%
    {\isquareppp(#1)|#2|/>`>`>`>/}}%
\def\isquareppp(#1)|#2|/#3`#4`#5`#6/{%
    \ifnextchar<{\isquarepppp(#1)|#2|/#3`#4`#5`#6/}%
    {\isquarepppp(#1)|#2|/#3`#4`#5`#6/<500,500>}}%
\def\cubepp|#1#2#3#4|/#5`#6`#7`#8/[#9]{%
\def\next[##1`##2`##3`##4]{\gdef\Labela{##1}%
\gdef\Labelb{##2}\gdef\Labelc{##3}\gdef\Labeld{##4}}\next[#9]%
\xend\xpos \yend\ypos%
\Xend\xend\advance\Xend by -\Xpos%
\Yend\yend\advance\Yend by -\Ypos%
\domorphism(\Xpos,\Ypos)|#2|/#6/<\Xend,\Yend>[\Nodeb`\nodeb;\Labelb]%
\advance\Xpos by-\deltaX%
\advance\xend by-\deltax%
\Xend\xend\advance\Xend by -\Xpos%
\domorphism(\Xpos,\Ypos)|#1|/#5/<\Xend,\Yend>[\Nodea`\nodea;\Labela]%
\advance\Ypos by-\deltaY%
\advance\yend by-\deltay%
\Yend\yend\advance\Yend by -\Ypos%
\domorphism(\Xpos,\Ypos)|#3|/#7/<\Xend,\Yend>[\Nodec`\nodec;\Labelc]%
\advance\Xpos by\deltaX%
\advance\xend by\deltax%
\Xend\xend\advance\Xend by -\Xpos%
\domorphism(\Xpos,\Ypos)|#4|/#8/<\Xend,\Yend>[\Noded`\noded;\Labeld]%
\ignorespaces}%
\def\setwdth#1#2{\setbox0\hbox{$\labelstyle#1$}\wdth=\wd0%
\setbox0\hbox{$\labelstyle#2$}\ifnum\wdth<\wd0 \wdth=\wd0 \fi}%
\def\topppp/#1/<#2>^#3_#4{\:%
\ifnum#2=0%
   \setwdth{#3}{#4}\deltax=\wdth \divide \deltax by \ul%
   \advance \deltax by \defaultmargin  \ratchet{\deltax}{100}%
\else \deltax #2%
\fi%
\xy\ar@{#1}^{#3}_{#4}(\deltax,0) \endxy%
\:}%
\def\toppp/#1/<#2>^#3{\ifnextchar_{\topppp/#1/<#2>^{#3}}{\topppp/#1/<#2>^{#3}_{}}}%
\def\topp/#1/<#2>{\ifnextchar^{\toppp/#1/<#2>}{\toppp/#1/<#2>^{}}}%
\def\toop/#1/{\ifnextchar<{\topp/#1/}{\topp/#1/<0>}}%
\def\to{\ifnextchar/{\toop}{\toop/>/}}%
\def\rlimto{{%
\font\xyatipfont=xyatip10 scaled 800
\font\xybtipfont=xybtip10 scaled 800
\raise 2pt\hbox{\,\xy\ar@{->}(100,0) \endxy}\,}}
\def\llimto{{%
\font\xyatipfont=xyatip10 scaled 800
\font\xybtipfont=xybtip10 scaled 800
\raise 2pt\hbox{\,\xy\ar@{<-}(100,0) \endxy}\,}}
\def\twopppp/#1`#2/<#3>^#4_#5{\:%
\ifnum0=#3%
  \setwdth{#4}{#5}\deltax=\wdth \divide \deltax by \ul \advance \deltax%
  by \defaultmargin \ratchet{\deltax}{200}%
\else \deltax#3 \fi%
\xy\ar@{#1}@<2.5pt>^{#4}(\deltax,0)%
\ar@{#2}@<-2.5pt>_{#5}(\deltax,0)\endxy\:}%
\def\twoppp/#1`#2/<#3>^#4{\ifnextchar_{\twopppp/#1`#2/<#3>^{#4}}%
  {\twopppp/#1`#2/<#3>^{#4}_{}}}%
\def\twopp/#1`#2/<#3>{\ifnextchar^{\twoppp/#1`#2/<#3>}{\twoppp/#1`#2/<#3>^{}}}%
\def\twop/#1`#2/{\ifnextchar<{\twopp/#1`#2/}{\twopp/#1`#2/<0>}}%
\def\threeppppp/#1`#2`#3/<#4>^#5|#6_#7{\:%
\ifnum0=#4%
\setbox0\hbox{$\labelstyle#5$}\wdth=\wd0%
\setbox0\hbox{$\labelstyle#6$}\ifnum\wdth<\wd0 \wdth=\wd0 \fi%
\setbox0\hbox{$\labelstyle#7$}\ifnum\wdth<\wd0 \wdth=\wd0 \fi%
\deltax=\wdth \divide \deltax by \ul \advance \deltax by%
\defaultmargin \ratchet{\deltax}{300}%
\else\deltax#4 \fi%
    \xy \ifnum\wd0=0 \ar@{#2}(\deltax,0)%
    \else \ar@{#2}|{#6}(\deltax,0)\fi%
\ar@{#1}@<4.5pt>^{#5}(\deltax,0)%
\ar@{#3}@<-4.5pt>_{#7}(\deltax,0)\endxy\:}%
\def\threepppp/#1`#2`#3/<#4>^#5|#6{\ifnextchar_{\threeppppp%
  /#1`#2`#3/<#4>^{#5}|{#6}}{\threeppppp/#1`#2`#3/<#4>^{#5}|{#6}_{}}}%
\def\threeppp/#1`#2`#3/<#4>^#5{\ifnextchar|{\threepppp%
  /#1`#2`#3/<#4>^{#5}}{\threepppp/#1`#2`#3/<#4>^{#5}|{}}}%
\def\threepp/#1`#2`#3/<#4>{\ifnextchar^{\threeppp/#1`#2`#3/<#4>}%
  {\threeppp/#1`#2`#3/<#4>^{}}}%
\def\threep/#1`#2`#3/{\ifnextchar<{\threepp/#1`#2`#3/}%
  {\threepp/#1`#2`#3/<0>}}%
\def\twoar(#1,#2){{%
 \scalefactor{0.1}%
 \deltax#1\deltay#2%
 \deltaX=\ifnum\deltax<0-\fi\deltax%
 \deltaY=\ifnum\deltay<0-\fi\deltay%
 \Xend\deltax \multiply \Xend by \deltax%
 \Yend\deltay \multiply \Yend by \deltay%
 \advance\Xend by \Yend \multiply \Xend by 3%
 \ifnum \deltaX > \deltaY%
    \multiply \deltaX by 3 \advance \deltaX by \deltaY%
 \else%
    \multiply \deltaY by 3 \advance \deltaX by \deltaY%
 \fi%
 \multiply\deltax by 500%
 \multiply\deltay by 500%
 \xpos\deltax \multiply \xpos by 3 \divide\xpos by \deltaX%
 \Xpos\deltax \multiply \Xpos by \deltaX \divide \Xpos by \Xend%
 \advance \xpos by \Xpos%
 \ypos\deltay \multiply \ypos by 3 \divide\ypos by \deltaX%
 \Ypos\deltay \multiply \Ypos by \deltaX \divide \Ypos by \Xend%
 \advance \ypos by \Ypos%
 \xy \ar@{=>}(\xpos,\ypos) \endxy%
}\ignorespaces}%
\def\iiixiiipppppp(#1,#2)|#3|/#4/<#5>#6<#7>[#8;#9]{%
 \xpos#1\ypos#2\relax%
 \def\next|##1##2##3##4##5##6##7|{\def\xa{##1}\def\xb{##2}%
 \def\xc{##3}\def\xd{##4}\def\xe{##5}\def\xf{##6}\nextt|##7|}%
 \def\nextt|##1##2##3##4##5##6|{\def\xg{##1}\def\xh{##2}%
 \def\xi{##3}\def\xj{##4}\def\xk{##5}\def\xl{##6}}%
 \next|#3|%
 \def\next<##1,##2>{\deltax##1\deltay##2}%
 \next<#5>%
 \def\next<##1,##2>{\deltaX##1\deltaY##2}%
 \next<#7>%
 \def\next##1{\topw##1\relax%
 \ifodd\topw \def\za{}\else\def\za{\relax}\fi \divide\topw by 2
 \ifodd\topw \def\zb{}\else\def\zb{\relax}\fi \divide\topw by 2
 \ifodd\topw \def\zc{}\else\def\zc{\relax}\fi \divide\topw by 2
 \ifodd\topw \def\zd{}\else\def\zd{\relax}\fi \divide\topw by 2
 \ifodd\topw \def\ze{}\else\def\ze{\relax}\fi \divide\topw by 2
 \ifodd\topw \def\zf{}\else\def\zf{\relax}\fi \divide\topw by 2
 \ifodd\topw \def\zg{}\else\def\zg{\relax}\fi \divide\topw by 2
 \ifodd\topw \def\zh{}\else\def\zh{\relax}\fi \divide\topw by 2
 \ifodd\topw \def\zi{}\else\def\zi{\relax}\fi \divide\topw by 2
 \ifodd\topw \def\zj{}\else\def\zj{\relax}\fi \divide\topw by 2
 \ifodd\topw \def\zk{}\else\def\zk{\relax}\fi \divide\topw by 2
 \ifodd\topw \def\zl{}\else\def\zl{\relax}\fi}%
 \next{#6}%
 \def\next[##1`##2`##3`##4`##5`##6`##7`##8`##9]{%
 \def\nodeA{##1}\def\nodeB{##2}\def\nodeC{##3}%
 \def\nodeD{##4}\def\nodeE{##5}\def\nodeF{##6}%
 \def\nodeG{##7}\def\nodeH{##8}\def\nodeI{##9}}%
 \next[#8]%
 \def\next[##1`##2`##3`##4`##5`##6`##7]{%
 \def\labela{##1}\def\labelb{##2}\def\labelc{##3}%
 \def\labeld{##4}\def\labele{##5}\def\labelf{##6}\nextt[##7]}%
 \def\nextt[##1`##2`##3`##4`##5`##6]{%
 \def\labelg{##1}\def\labelh{##2}\def\labeli{##3}%
 \def\labelj{##4}\def\labelk{##5}\def\labell{##6}}%
 \next[#9]%
 \def\next/##1`##2`##3`##4`##5`##6`##7`##8/{%
 \advance\ypos\deltay
    \ifx\zf\empty \morphism(\xpos,\ypos)/<-/<-\deltaX,0>[\nodeD`0;]\fi
 \morphism(\xpos,\ypos)|\xf|/{##6}/<\deltax,0>[\nodeD`\nodeE;\labelf]%
    \advance \xpos\deltax
    \morphism(\xpos,\ypos)|\xg|/{##7}/<\deltax,0>[\nodeE`\nodeF;\labelg]%
    \ifx\zg\empty \advance\xpos \deltax
        \morphism(\xpos,\ypos)<\deltaX,0>[\nodeF`0;]\fi
    \xpos#1 \advance\ypos\deltay
    \ifx\zd\empty \morphism(\xpos,\ypos)/<-/<-\deltaX,0>[\nodeA`0;]\fi
    \ifx\za\empty \morphism(\xpos,\ypos)/<-/<0,\deltaY>[\nodeA`0;]\fi
    \morphism(\xpos,\ypos)|\xa|/{##1}/<\deltax,0>[\nodeA`\nodeB;\labela]%
 \morphism(\xpos,\ypos)|\xc|/{##3}/<0,-\deltay>[\nodeA`\nodeD;\labelc]%
    \advance \xpos\deltax
     \morphism(\xpos,\ypos)|\xb|/{##2}/<\deltax,0>[\nodeB`\nodeC;\labelb]%
     \morphism(\xpos,\ypos)|\xd|/{##4}/<0,-\deltay>[\nodeB`\nodeE;\labeld]%
     \ifx\zb\empty \morphism(\xpos,\ypos)/<-/<0,\deltaY>[\nodeB`0;]\fi
     \advance\xpos\deltax
 \morphism(\xpos,\ypos)|\xe|/{##5}/<0,-\deltay>[\nodeC`\nodeF;\labele]%
     \ifx\zc\empty \morphism(\xpos,\ypos)/<-/<0,\deltaY>[\nodeC`0;]\fi
     \ifx\ze\empty \morphism(\xpos,\ypos)<\deltaX,0>[\nodeC`0;]\fi
   \nextt/##8/}%
 \def\nextt/##1`##2`##3`##4`##5/{%
 \xpos#1\ypos#2\relax%
   \ifx\zh\empty \morphism(\xpos,\ypos)/<-/<-\deltaX,0>[\nodeG`0;]\fi
   \ifx\zj\empty \morphism(\xpos,\ypos)<0,-\deltaY>[\nodeG`0;]\fi
   \morphism(\xpos,\ypos)|\xk|/{##4}/<\deltax,0>[\nodeG`\nodeH;\labelk]%
   \advance\xpos\deltax
   \morphism(\xpos,\ypos)|\xl|/{##5}/<\deltax,0>[\nodeH`\nodeI;\labell]%
   \ifx\zk\empty \morphism(\xpos,\ypos)<0,-\deltaY>[\nodeH`0;]\fi
   \advance\xpos\deltax
   \ifx\zi\empty \morphism(\xpos,\ypos)<\deltaX,0>[\nodeI`0;]\fi
   \ifx\zl\empty \morphism(\xpos,\ypos)<0,-\deltaY>[\nodeI`0;]\fi
   \xpos#1 \advance\ypos\deltay
    \morphism(\xpos,\ypos)|\xh|/{##1}/<0,-\deltay>[\nodeD`\nodeG;\labelh]%
    \advance \xpos\deltax
    \morphism(\xpos,\ypos)|\xi|/{##2}/<0,-\deltay>[\nodeE`\nodeH;\labeli]%
    \advance \xpos\deltax
 \morphism(\xpos,\ypos)|\xj|/{##3}/<0,-\deltay>[\nodeF`\nodeI;\labelj]}%
 \next/#4/\ignorespaces}%
\def\iiixiiip(#1){\ifnextchar|{\iiixiiipp(#1)}%
  {\iiixiiipp(#1)|aalmrmmlmrbb|}}%
\def\iiixiiipp(#1)|#2|{\ifnextchar/{\iiixiiippp(#1)|#2|}%
    {\iiixiiippp(#1)|#2|/>`>`>`>`>`>`>`>`>`>`>`>/}}%
\def\iiixiiippp(#1)|#2|/#3/{%
    \ifnextchar<{\iiixiiipppp(#1)|#2|/#3/}%
    {\iiixiiipppp(#1)|#2|/#3/<\default,\default>}}%
\def\iiixiiipppp(#1)|#2|/#3/<#4>{\ifnextchar[{\iiixiiippppp(#1)|#2|/#3/%
   <#4>0<0,0>}{\iiixiiippppp(#1)|#2|/#3/<#4>}}%
\def\iiixiiippppp(#1)|#2|/#3/<#4>#5{\ifnextchar<%
   {\iiixiiipppppp(#1)|#2|/#3/<#4>{#5}}%
   {\iiixiiipppppp(#1)|#2|/#3/<#4>{#5}<400,400>}}%
\def\iiixiipppppp(#1,#2)|#3|/#4/<#5>#6<#7>[#8;#9]{%
 \xpos#1\ypos#2\relax%
 \def\next|##1##2##3##4##5##6##7|{\def\xa{##1}\def\xb{##2}%
 \def\xc{##3}\def\xd{##4}\def\xe{##5}\def\xf{##6}\def\xg{##7}}%
 \next|#3|%
 \def\next<##1,##2>{\deltax##1\deltay##2}%
 \next<#5>%
 \deltaX#7
 \topw#6
 \def\next{%
 \ifodd\topw \def\za{}\else\def\za{\relax}\fi \divide\topw by 2
 \ifodd\topw \def\zb{}\else\def\zb{\relax}\fi \divide\topw by 2
 \ifodd\topw \def\zc{}\else\def\zc{\relax}\fi \divide\topw by 2
 \ifodd\topw \def\zd{}\else\def\zd{\relax}\fi}%
 \next%
 \def\next[##1`##2`##3`##4`##5`##6]{%
 \def\nodea{##1}\def\nodeb{##2}\def\nodec{##3}%
 \def\noded{##4}\def\nodee{##5}\def\nodef{##6}}%
 \next[#8]%
 \def\next[##1`##2`##3`##4`##5`##6`##7]{%
 \def\labela{##1}\def\labelb{##2}\def\labelc{##3}%
 \def\labeld{##4}\def\labele{##5}\def\labelf{##6}\def\labelg{##7}}%
 \next[#9]%
 \def\next/##1`##2`##3`##4`##5`##6`##7/{%
 {\ifx\zc\empty\advance\xpos -\deltaX
\relax\morphism(\xpos,\ypos)<\deltaX,0>[0`\noded;]\fi}%
 \morphism(\xpos,\ypos)|\xf|/##6/<\deltax,0>[\noded`\nodee;\labelf]%
 \advance\xpos by \deltax%
 \morphism(\xpos,\ypos)|\xg|/##7/<\deltax,0>[\nodee`\nodef;\labelg]%
 {\ifx\zd\empty \advance\xpos by \deltax
\relax  \morphism(\xpos,\ypos)<\deltaX,0>[\nodef`0;]\fi}%
 \advance\xpos by -\deltax  \advance\ypos by \deltay
 {\ifx\za\empty\advance \xpos by -\deltaX
\relax\morphism(\xpos,\ypos)<\deltaX,0>[0`\nodea;]\fi}%
 \morphism(\xpos,\ypos)|\xa|/##1/<\deltax,0>[\nodea`\nodeb;\labela]%
 \morphism(\xpos,\ypos)|\xc|/##3/<0,-\deltay>[\nodea`\noded;\labelc]%
 \advance\xpos by \deltax%
 \morphism(\xpos,\ypos)|\xb|/##2/<\deltax,0>[\nodeb`\nodec;\labelb]%
 \morphism(\xpos,\ypos)|\xd|/##4/<0,-\deltay>[\nodeb`\nodee;\labeld]%
 \advance\xpos by \deltax%
 \morphism(\xpos,\ypos)|\xe|/##5/<0,-\deltay>[\nodec`\nodef;\labele]%
 \ifx\zb\empty\relax \morphism(\xpos,\ypos)<\deltaX,0>[\nodec`0;]\fi}%
 \next/#4/\ignorespaces}%
\def\iiixiip(#1){\ifnextchar|{\iiixiipp(#1)}%
  {\iiixiipp(#1)|aalmrbb|}}%
\def\iiixiipp(#1)|#2|{\ifnextchar/{\iiixiippp(#1)|#2|}%
    {\iiixiippp(#1)|#2|/>`>`>`>`>`>`>/}}%
\def\iiixiippp(#1)|#2|/#3/{%
    \ifnextchar<{\iiixiipppp(#1)|#2|/#3/}%
    {\iiixiipppp(#1)|#2|/#3/<\default,\default>}}%
\def\iiixiipppp(#1)|#2|/#3/<#4>{\ifnextchar[{\iiixiippppp(#1)|#2|/#3/%
   <#4>{0}<0>}{\iiixiippppp(#1)|#2|/#3/<#4>}}%
\def\iiixiippppp(#1)|#2|/#3/<#4>#5{\ifnextchar<%
   {\iiixiipppppp(#1)|#2|/#3/<#4>{#5}}%
   {\iiixiipppppp(#1)|#2|/#3/<#4>{#5}<400>}}%
\def\node#1(#2,#3)[#4]{%
\expandafter\gdef\csname x@#1\endcsname{#2}%
\expandafter\gdef\csname y@#1\endcsname{#3}%
\expandafter\gdef\csname ob@#1\endcsname{#4}%
\place(#2,#3)[#4]\ignorespaces}%
\def\arrow{\ifnextchar|{\arrowp}{\arrowp|a|}}%
\def\arrowp|#1|{\ifnextchar/{\arrowpp|#1|}{\arrowpp|#1|/>/}}%
\def\arrowpp|#1|/#2/[#3`#4;#5]{%
\xfinish=\csname x@#4\endcsname%
\yfinish=\csname y@#4\endcsname%
\advance\xfinish by -\csname x@#3\endcsname%
\advance\yfinish by -\csname y@#3\endcsname%
\morphism(\csname x@#3\endcsname,\csname y@#3\endcsname)|#1|/{#2}/%
<\xfinish,\yfinish>[\phantom{\csname ob@#3\endcsname}`\phantom{\csname
ob@#4\endcsname};#5]%
}%
\def\Loop(#1,#2)#3(#4,#5){\POS(#1,#2)*+!!<0ex,\axis>{#3}\ar@(#4,#5)}%
\def\iloop#1(#2,#3){\xy\Loop(0,0)#1(#2,#3)\endxy}%
     \let \PATHafterPOS\PATHafterPOS@default%
     \let \arsavedPATHafterPOS@@\relax%
     \let\afterar@@\relax%
\xydef@\endxyobj{\if\inxy@\else\xyerror@{Unexpected \string\endxy}{}\fi%
>  \relax%
>   \dimen@=\Y@max \advance\dimen@-\Y@min%
>   \ifdim\dimen@<\z@ \dimen@=\z@ \Y@min=\z@ \Y@max=\z@ \fi%
>   \dimen@=\X@max \advance\dimen@-\X@min%
>   \ifdim\dimen@<\z@ \dimen@=\z@ \X@min=\z@ \X@max=\z@ \fi%
>   \edef\tmp@{\egroup%
>     \setboxz@h{\kern-\the\X@min \boxz@}%
>     \ht\z@=\the\Y@max \dp\z@=-\the\Y@min \wdz@=\the\dimen@%
>     \noexpand\maybeunraise@ \raise\dimen@\boxz@%
>     \noexpand\recoverXyStyle@ \egroup \noexpand\xy@end%
>     \U@c=\the\Y@max \advance\U@c-\the\Y@c%
>     \D@c=-\the\Y@min \advance\D@c\the\Y@c%
>     \L@c=-\the\X@min  \advance\L@c\the\X@c%
>     \R@c=\the\X@max  \advance\R@c-\the\X@c%
>    }\tmp@}%
\gdef\xymerge@MinMax{}%
\xydef@\twocell{\hbox\bgroup\xysave@MinMax\@twocell}%
\xydef@\uppertwocell{\hbox\bgroup\xysave@MinMax\@uppertwocell}%
\xydef@\lowertwocell{\hbox\bgroup\xysave@MinMax\@lowertwocell}%
\xydef@\compositemap{\hbox\bgroup\xysave@MinMax\@compositemap}%
\xydef@\xysave@MinMax{\xdef\xymerge@MinMax{%
   \noexpand\ifdim\X@max<\the\X@max \X@max=\the\X@max\noexpand\fi%
   \noexpand\ifdim\X@min>\the\X@min \X@min=\the\X@min\noexpand\fi%
   \noexpand\ifdim\Y@max<\the\Y@max \Y@max=\the\Y@max\noexpand\fi%
   \noexpand\ifdim\Y@min>\the\Y@min \Y@min=\the\Y@min\noexpand\fi%
  }}%
\xydef@\drop@Twocell{\boxz@ \xymerge@MinMax}%
\xydef@\twocell@DONE{%
  \edef\tmp@{\egroup%
   \X@min=\the\X@min \X@max=\the\X@max%
   \Y@min=\the\Y@min \Y@max=\the\Y@max}\tmp@%
  \L@c=\X@c \advance\L@c-\X@min \R@c=\X@max \advance\R@c-\X@c%
  \D@c=\Y@c \advance\D@c-\Y@min \U@c=\Y@max \advance\U@c-\Y@c%
  \ht\z@=\U@c \dp\z@=\D@c \dimen@=\L@c \advance\dimen@\R@c \wdz@=\dimen@%
  \computeLeftUpness@%
  \setboxz@h{\kern-\X@p \raise-\Y@c\boxz@ }%
  \dimen@=\L@c \advance\dimen@\R@c \wdz@=\dimen@ \ht\z@=\U@c \dp\z@=\D@c%
  \Edge@c={\rectangleEdge}\Invisible@false \Hidden@false%
  \edef\Drop@@{\noexpand\drop@Twocell%
   \noexpand\def\noexpand\Leftness@{\Leftness@}%
   \noexpand\def\noexpand\Upness@{\Upness@}}%
  \edef\Connect@@{\noexpand\connect@Twocell%
   \noexpand\ifdim\X@max<\the\X@max \X@max=\the\X@max\noexpand\fi%
   \noexpand\ifdim\X@min>\the\X@min \X@min=\the\X@min\noexpand\fi%
   \noexpand\ifdim\Y@max<\the\Y@max \Y@max=\the\Y@max\noexpand\fi%
   \noexpand\ifdim\Y@min>\the\Y@min \Y@min=\the\Y@min\noexpand\fi }%
  \xymerge@MinMax%
}%
\newbox\anglebox 
\newbox\angleboxr 
\newbox\sanglebox 
\newbox\sangleboxr 
\newbox\sangleboxf 
\newbox\angleboxf 
\newbox\sangleboxfr 
\newbox\angleboxfr 
\newcommand{\Sets}{\ensuremath{\mathbf{Set}}}
\title{Preservation theorems for strong first-order logics}
\author{Christian Esp\'indola}
\begin{document}
\date{}
\maketitle

\begin{abstract}
We prove preservation theorems for $\mathcal{L}_{\omega_1, G}$, the countable fragment of Vaught's closed game logic. These are direct generalizations of the theorems of \L{}o\'s-Tarski (resp. Lyndon) on sentences of $\mathcal{L}_{\omega_1, \omega}$ preserved by substructures (resp. homomorphic images). The solution, in $ZFC$, only uses general features and can be extended to several variants of other strong first-order logic that do not satisfy the interpolation theorem; instead, the results on infinitary definability are used. This solves an open problem dating back to 1977. Another consequence of our approach is the equivalence of the Vop\v{e}nka principle and a general definability theorem on subsets preserved by homomorphisms.
\end{abstract}

\noindent $\mathbf{Keywords:}$ infinitary logics, preservation theorems, infinitary model theory, categorical logic

\section{Introduction}
  \L{}o\'s-Tarski preservation theorem for first-order logic states that if a sentence is preserved under substructures, it is equivalent to a universal sentence, i.e., one in which, in negation normal form, only contains universal quantifiers (see e.g., \cite{hodges}). This result is essentially a corollary of a more general result on sentences preserved under homomorphisms, which are equivalent to so called positive existential sentences (coherent sentences, in the terminology of \cite{johnstone}).
  Lyndon found another related preservation theorem for sentences preserved under homomorphic images (that is, surjective homomorphisms). Namely, any such sentence is equivalent to a positive sentence, i.e., a sentence for which, in negation normal form, no atomic formula occurs negated (\cite{lyndon}). Both \L{}o\'s-Tarski and Lyndon preservation theorems have been generalized to the infinitary logic $\mathcal{L}_{\omega_1, \omega}$ (see \cite{malitz}, \cite{le}, \cite{keisler}).
  Strong first-order logic are an extension of the logic of $\mathcal{L}_{\omega_1, \omega}$ described in \cite{burgess}, which share some of its properties, while others fail. In \cite{burgess} the failure of the interpolation theorem is shown, while the question on preservation theorems holding for, e.g., $\mathcal{L}_{\omega_1, G}$, are left open. From the last paragraph: 

``One large problem in the model theory of strong first-order languages remains open, which does not lend itself to abstract, descriptive-set-theoretic statement: Can we prove for, say, $\mathcal{L}_{\omega_1, G}$, that any sentence preserved under substructure (resp. homomorphic image) is equivalent to a universal (resp. positive) sentence?"

We give here a positive answer to both questions (universal and positive sentences) in the case of Vaught's game logic $\mathcal{L}_{\omega_1, G}$. The methods are however general enough to be carried out within $ZFC$ and to apply to a wider variety of the languages presented in \cite{burgess}. Instead of considering descriptive set-theoretic arguments, which encounter difficulties when analyzing preservation theorems, we will rely instead on a definability result here obtained with the aid of topos-theoretic techniques. More precisely, we work with $\lambda$-classifying toposes, introduced in \cite{espindolad}. These will also allow us to show the equivalence of Vop\v{e}nka principle with a general definability theorem on subsets preserved by homomorphisms.

  The structure of this work is as follows: we first present the topos-theoretic argument leading to our definability result, and later present its applications to the particular problem of preservation by homomorphisms. We assume that the reader is acquainted with the basic topos-theoretic machinery, particularly with section $D$ of \cite{johnstone} as well as familiarity with $\lambda$-coherent logic and $\lambda$-classifying toposes from \cite{espindola} and \cite{espindolad}. This is a continuation of the research project on infinitary first-order categorical logic started by the author in \cite{espindola}.

\section{The $\lambda$-classifying topos of a $\kappa$-theory}

In this section fix $\kappa<\lambda$ such that $\kappa^{<\kappa}=\kappa$ and $\lambda^{<\lambda}=\lambda$. Let $\mathbb{T}$ be a $\kappa$-coherent theory in $\mathcal{L}_{\kappa^+, \kappa}$, $\mathcal{C}_{\mathbb{T}}$ be its syntactic category and $Mod_{\lambda}(\mathbb{T})$ be the full subcategory of $\lambda$-presentable models. Assume that the category of models of $\mathbb{T}$ is $\lambda$-accessible (this is the case, e.g., if $\lambda=\kappa^+$ or, more generally, if $\kappa \vartriangleleft \lambda$). Let $\mathbb{T}'$ be the theory in $\mathcal{L}_{\lambda^+, \lambda}$ with the same axioms as those of $\mathbb{T}$. An important result we will prove here is the following:

\begin{thm}\label{lk}
The $\lambda$-classifying topos of $\mathbb{T}'$ is equivalent to the presheaf topos $\mathcal{S}et^{Mod_{\lambda}(\mathbb{T})}$. Moreover, the canonical embedding of the syntactic category $\mathcal{C}_{\mathbb{T}'} \hookrightarrow \mathcal{S}et^{Mod_{\lambda}(\mathbb{T})}$ is given by the evaluation functor, which on objects acts by sending $(\mathbf{x}, \phi)$ to the functor $\{M \mapsto [[\phi]]^M\}$.
\end{thm}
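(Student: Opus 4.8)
The plan is to identify the $\lambda$-classifying topos of $\mathbb{T}'$ by exhibiting $\mathcal{S}et^{Mod_\lambda(\mathbb{T})}$ together with the evaluation functor as a solution to the relevant universal property, namely that geometric ($\lambda$-geometric) morphisms into it from a $\lambda$-topos $\mathcal{E}$ correspond naturally to models of $\mathbb{T}'$ internal to $\mathcal{E}$. First I would recall from \cite{espindolad} the construction of the $\lambda$-classifying topos as (a suitable $\lambda$-localization of) the $\lambda$-ind-completion of $\op{\mathcal{C}_{\mathbb{T}'}}$, or equivalently as presheaves on a small full subcategory of the category of $\lambda$-presentable models of $\mathbb{T}'$. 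The key observation is that, since $\mathbb{T}'$ has the same axioms as $\mathbb{T}$ but is now read in $\mathcal{L}_{\lambda^+,\lambda}$, its models \emph{are exactly} the models of $\mathbb{T}$; and because the category of $\mathbb{T}$-models is assumed $\lambda$-accessible, the $\lambda$-presentable $\mathbb{T}'$-models coincide with the objects of $Mod_\lambda(\mathbb{T})$, which is (essentially) small. This is the step that makes the presheaf topos appear: a $\lambda$-coherent theory whose models form a $\lambda$-accessible category, when relaxed to allow arbitrary conjunctions below $\lambda$, becomes ``$\lambda$-of-presentation-rank'' enough that its classifying topos degenerates to a presheaf topos on its $\lambda$-presentable models.

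Next I would verify the ``Moreover'' part by tracking the canonical embedding $\mathcal{C}_{\mathbb{T}'}\hookrightarrow \mathcal{S}et^{Mod_\lambda(\mathbb{T})}$ through this identification. Concretely, the classifying topos comes equipped with a generic model, and the embedding sends a syntactic object $(\mathbf{x},\phi)$ to the subobject it names in the generic model; under the equivalence with presheaves on $Mod_\lambda(\mathbb{T})$ the generic model is computed pointwise, so $(\mathbf{x},\phi)$ goes to the functor $M \mapsto \sem{\phi}^M = [[\phi]]^M$, i.e.\ the evaluation functor $\evalmod{}{-}$. I would check that this functor is $\lambda$-coherent (preserves $\lambda$-small limits and the relevant covers), which follows because finite/$\kappa$-limits and $\lambda$-unions of subobjects in a presheaf category are computed pointwise and $[[-]]^M$ preserves them by definition of satisfaction; and that it is conservative / an embedding, which is where one uses that there are ``enough'' $\lambda$-presentable models — a downward Löwenheim–Skolem-type argument (available because $\lambda^{<\lambda}=\lambda$ and the accessibility hypothesis) guarantees that a formula forced to hold in every $M \in Mod_\lambda(\mathbb{T})$ is provable in $\mathbb{T}'$.

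The main obstacle I expect is precisely this completeness/conservativity input: showing that $Mod_\lambda(\mathbb{T})$ is a \emph{conservative} class for $\lambda$-coherent entailment over $\mathbb{T}'$, so that the evaluation functor reflects covers and monos and hence the induced geometric morphism's inverse image is faithful on the syntactic site. This requires the $\lambda$-completeness theorem for $\lambda$-coherent logic from \cite{espindola} together with a Löwenheim–Skolem argument bringing witnessing models down to size $<\lambda$; the cardinal arithmetic assumptions ($\kappa^{<\kappa}=\kappa$, $\lambda^{<\lambda}=\lambda$, and $\kappa \vartriangleleft \lambda$ or $\lambda=\kappa^+$) are exactly what is needed to run it. Once conservativity is in hand, comparison of universal properties is formal: a $\lambda$-coherent conservative embedding into a presheaf topos whose domain is the syntactic category, together with the fact that both sides classify the same models, forces the equivalence, by the uniqueness of classifying toposes established in \cite{espindolad}. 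The remaining verifications — naturality of the correspondence, and that the equivalence is compatible with the two generic models — are routine diagram chases which I would only sketch.
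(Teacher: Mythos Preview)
Your proposal correctly identifies several ingredients---that $\mathbb{T}$ and $\mathbb{T}'$ have the same models, that $Mod_\lambda(\mathbb{T})$ is essentially small, and that conservativity of the evaluation functor is needed and follows from a completeness/L\"owenheim--Skolem argument. However, there is a genuine gap in the step you describe as ``formal'': a $\lambda$-coherent conservative embedding $ev:\mathcal{C}_{\mathbb{T}'}\hookrightarrow \mathcal{S}et^{Mod_\lambda(\mathbb{T})}$ does \emph{not} by itself force the presheaf topos to be the classifying topos. Conservativity ensures that the comparison geometric morphism is a surjection, but you also need that the image of $ev$ generates $\mathcal{S}et^{Mod_\lambda(\mathbb{T})}$ under the ambient colimit/limit structure; otherwise the comparison could land in a proper subtopos. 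Your phrase ``both sides classify the same models'' is precisely what is at stake, not something already available. Relatedly, your opening appeal to \cite{espindolad} for a description of the $\lambda$-classifying topos as ``presheaves on $\lambda$-presentable models'' is assuming the conclusion: that description holds exactly for theories of presheaf type, and establishing it for $\mathbb{T}'$ is the content of the theorem.

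The paper supplies the missing half by a concrete decomposition lemma that you do not mention. Each $\lambda$-presentable model $M_i$, viewed as a functor $\mathcal{C}_{\mathbb{T}}\to\Sets$, is a $\lambda$-small colimit of representables $[\phi_{ij},-]$; hence by Yoneda the representable presheaf $[M_i,-]$ on $Mod_\lambda(\mathbb{T})$ is a $\lambda$-small \emph{limit} $\varprojlim_j ev(\phi_{ij})$. Since every presheaf $F$ is a colimit of representables, one obtains $F\cong \varinjlim_i\varprojlim_j ev(\phi_{ij})$. The paper then runs the comparison not against an arbitrary $\mathcal{E}$ but against the actual classifying topos, embedded via enough $\lambda$-points into some $\Sets^I$ (this is where the completeness theorem of \cite{espindolad} is invoked); the decomposition shows that the induced functor $G:\mathcal{S}et^{Mod_\lambda(\mathbb{T})}\to\Sets^I$ is determined by, and factors through, the classifying topos. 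So the architecture is rather different from yours: completeness is used to produce enough points of $\mathcal{E}$, not to establish conservativity of $ev$, and the substantive work is the colimit-of-limits decomposition, which is what you are missing.
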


\begin{proof}
By hypothesis every model of $\mathbb{T}'$ is a $\lambda$-filtered colimit of models in $Mod_{\lambda}(\mathbb{T})$. Note first that the following diagram:

\begin{displaymath}
\xymatrix{
\mathcal{C}_{\mathbb{T}'} \ar@{^{(}->}[rr]^{ev} \ar@{->}[ddr]_{M \cong \varinjlim_i M_i} & & \Sets^{Mod_{\lambda}(\mathbb{T})} \ar@{->}[ddl]^{M' \cong \varinjlim_i ev_{M_i}}\\
 & & \\
 & \Sets & \\
}
\end{displaymath}
\noindent
commutes up to invertible $2$-cell. Here $ev$ and $ev_{M_i}$ are the evaluation functors, defined on objects as $ev((\mathbf{x}, \phi))=\{M \mapsto [[\phi]]^M\}$ and $ev_{M_i}(F)=F(M_i)$, respectively, while $\varinjlim M_i$ is the canonical $\lambda$-filtered colimit of $\lambda$-presentable models associated to the model $M$. Note also that since $\lambda$-filtered colimits commute with $\lambda$-small limits, $M'$ will preserve, in addition to all colimits, also $\lambda$-small limits.

Let now $\Sets[\mathbb{T}']_{\lambda}$ be the $\lambda$-classifying topos of $\mathbb{T}'$. We shall prove that this latter is equivalent to $\mathcal{S}et^{Mod_{\lambda}(\mathbb{T})}$ by verifying in this presheaf topos the universal property of $\Sets[\mathbb{T}']_{\lambda}$, i.e., that models of $\mathbb{T}'$ in a $\lambda$-topos $\mathcal{E}$ corresponds to $\lambda$-geometric morphisms from $\mathcal{E}$ to the presheaf topos. It is enough to prove this universal property in the particular case in which $\mathcal{E}=\Sets[\mathbb{T}']_{\lambda}$. 

  Given then the $\lambda$-classifying topos $\mathcal{E}$ of $\mathbb{T}'$, by the completeness theorem of \cite{espindolad} it will have enough $\lambda$-points. Hence, there is a conservative $\lambda$-geometric morphism with inverse image $E: \mathcal{E} \to \Sets^{I}$ such that composition with the evaluation at $i \in I$, $ev(i)E$ gives a $\lambda$-point of $\mathcal{E}$. Now each model of $\mathbb{T}'$ in $\mathcal{E}$, $N: \mathcal{C}_{\mathbb{T}'} \to \mathcal{E}$ gives rise to models in $\Sets$ by considering their images through each $ev(i)E$. These correspond to unique (up to isomorphism) $\lambda$-geometric morphisms with inverse image $\Sets^{Mod_{\lambda}(\mathbb{T})} \to \Sets$, which in turn induce a $\lambda$-geometric morphism with inverse image $G: \Sets^{Mod_{\lambda}(\mathbb{T})} \to \Sets^I$ and with the property that the composition $G \circ ev: \mathcal{C}_{\mathbb{T}'} \to \Sets^{Mod_{\lambda}(\mathbb{T})} \to \Sets^I$ is the same (up to isomorphism) as $EN: \mathcal{C}_{\mathbb{T}'} \to \Sets^I$. In other words, considering $\mathcal{E}$ as a subcategory of $\Sets^I$, the image of $G \circ ev$ belongs to $\mathcal{E}$.

\begin{displaymath}
\xymatrix{
\mathcal{C}_{\mathbb{T}'} \ar@{^{(}->}[rr]^{ev} \ar@{->}[ddr]_{N} & & \Sets^{Mod_{\lambda}(\mathbb{T})} \ar@{-->}[ddl] \ar@{->}[dddl]^{G} \\
 & & \\
 & \mathcal{E} \ar@{->}[d]_{E} & \\
 & \Sets^I \ar@{->}[d]_{ev(i)} & \\
  & \Sets & \\
}
\end{displaymath}
\noindent

  On the other hand, every object $F$ in $\Sets^{Mod_{\lambda}(\mathbb{T})}$ can be canonically expressed as a colimit of representables, $F \cong \varinjlim_i [M_i, -]$. In turn, each $M_i: \mathcal{C}_{\mathbb{T}} \to \Sets$ is a $\lambda$-small colimit of representables $M_i \cong \varinjlim_j [\phi_{ij}, -]$. It follows that:

$$F \cong \varinjlim_i[\varinjlim_j[\phi_{ij}, -]_{\mathcal{C}_{\mathbb{T}'}}, -]_{Mod_{\lambda}(\mathbb{T})} \cong \varinjlim_i \varprojlim_j [[\phi_{ij}, -]_{\mathcal{C}_{\mathbb{T}'}}, -]_{Mod_{\lambda}(\mathbb{T})} \cong \varinjlim_i \varprojlim_j ev(\phi_{ij})$$
\noindent
where the last isomorphism follows from Yoneda lemma. Now $G$ preserves $\lambda$-small limits and colimits, and so we will have:

$$G(F) \cong \varinjlim_i \varprojlim_j G \circ ev(\phi_{ij})$$
\noindent
and similarly on arrows. Therefore, $G$ is completely determined (up to isomorphism) by its value on the objects $ev(\phi_{ij})$. Since the value of $G$ on such objects belongs to $\mathcal{E}$, and $E$ preserves $\lambda$-small limits and colimits, it follows that $G$ itself factors through $\mathcal{E}$. Moreover, it is the unique (up to isomorphism) inverse image of a $\lambda$-geometric morphism corresponding to the given model in $\mathcal{E}$. This finishes the proof.
\end{proof}  

\section{Preservation theorems for $\mathcal{L}_{\omega_1, G}$}

The language $\mathcal{L}_{\omega_1, G}$ is the fragment of Vaught's closed game logic which extends $\mathcal{L}_{\omega_1, \omega}$ by allowing the following instance of infinitary quantification:

$$\forall x_0 \bigwedge_{b_0 \in I} \exists y_0 \bigvee_{c_0 \in I} \forall x_1 \bigwedge_{b_1 \in I} \exists y_1 \bigvee_{c_1 \in I} \qquad ... \qquad \bigwedge_{i<\omega} \phi^{b_0c_0b_1c_1...b_ic_i}_i(x_0, y_0, ..., x_i, y_i) \qquad (1)$$
\noindent
There is a game semantics associated to the sentence (1) as follows: the first player chooses an element $x_0$ and a conjunct $b_0$, then the second player chooses an element $y_0$ and a disjunct $c_0$, and the game continues with $\omega$ many moves, after which the second player wins if with the choices made during the game it is the case that each $\phi^{b_0c_0b_1c_1...b_ic_i}_i(x_0, y_0, ..., x_i, y_i)$ is satisfied in the structure $M$, for every $i<\omega$. Since the formula in the matrix corresponds to a closed subset of $|M|^{\omega} \times I^{\omega}$, by determinacy for closed games it follows that the game is determined, and hence the formula is said to be true in $M$ if the second player has a winning strategy, while it is said to be false if the first player has a winning strategy, i.e., if:

$$\exists x_0 \bigvee_{b_0 \in I} \forall y_0 \bigwedge_{c_0 \in I} \exists x_1 \bigvee_{b_1 \in I} \forall y_1 \bigwedge_{c_1 \in I} \qquad ... \qquad \bigvee_{i<\omega} \neg \phi^{b_0c_0b_1c_1...b_ic_i}_i(x_0, y_0, ..., x_i, y_i) \qquad (2)$$
\noindent
holds. The formula (1) generates a fragment within Vaught's closed game logic closed under finitary connectives and operations and containing all subformulas of (1). We have now:

\begin{thm}\label{universal}
A sentence of $\mathcal{L}_{\omega_1, G}$ which is preserved under substructures is equivalent to a universal sentence of $\mathcal{L}_{\omega_1, G}$.
\end{thm}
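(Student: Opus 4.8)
The plan is to follow the classical strategy for Łoś–Tarski-type theorems, but to replace the usual compactness argument (unavailable for $\mathcal{L}_{\omega_1, G}$) by the categorical definability result implicit in Theorem~\ref{lk}. First I would fix the fragment $\mathcal{F}$ of Vaught's closed game logic generated by the sentence $\sigma$ under consideration together with the instances of infinitary quantification of the shape~(1) that occur in it, and I would pass to the $\kappa$-coherent theory $\mathbb{T}$ obtained by Morleyizing $\mathcal{F}$ (with $\kappa=\omega_1$, say, so that $\kappa^{<\kappa}=\kappa$): every formula of $\mathcal{F}$, including the game formula~(1) rewritten using its closed-game unwinding, becomes a coherent formula over an expanded signature, and the models of $\mathbb{T}$ are precisely the $\mathcal{F}$-structures with their canonical interpretations. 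The point of this step is that substructure embeddings in the original language correspond to a distinguished class of morphisms in $Mod_\lambda(\mathbb{T})$, and being ``preserved under substructures'' becomes the statement that the subobject of the relevant representable cut out by $\sigma$ in the presheaf topos $\Sets^{Mod_\lambda(\mathbb{T})}$ is stable under these morphisms.

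Next I would invoke Theorem~\ref{lk}: the $\lambda$-classifying topos of $\mathbb{T}'$ (the same axioms read in $\mathcal{L}_{\lambda^+,\lambda}$) is the presheaf topos $\Sets^{Mod_\lambda(\mathbb{T})}$, and the syntactic category $\mathcal{C}_{\mathbb{T}'}$ sits inside it via the evaluation functor $ev$, with $ev((\mathbf{x},\phi)) = \{M \mapsto \sem{\phi}^M\}$. The subobject of $ev((\mathbf{x},\top))$ determined by $\sigma$ is, by the universal property, classified by some $\lambda$-geometric formula; the task is to show that the hypothesis of preservation under substructures forces this classifying formula to be equivalent, in $\mathbb{T}'$, to a universal one. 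Here I would use the analogue, in the $\lambda$-coherent setting, of the standard fact that a subobject of a representable in the classifying topos which is stable under the ``substructure'' morphisms is a $\lambda$-intersection (i.e.\ $\lambda$-small conjunction together with the relevant universal quantifiers) of basic subobjects — this is exactly the infinitary definability input the introduction promises to use in place of interpolation. Concretely: express the given subobject as a $\lambda$-filtered colimit of representables as in the proof of Theorem~\ref{lk}, use stability under the distinguished morphisms to replace each representable piece by the smallest substructure-closed subobject containing it, and observe that the latter is definable by a universal formula of $\mathcal{L}_{\lambda^+,\lambda}$; stability then collapses the colimit to a single such formula.

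Finally I would descend from $\mathcal{L}_{\lambda^+,\lambda}$ back to $\mathcal{L}_{\omega_1, G}$. The universal formula produced above is a priori an $\mathcal{L}_{\lambda^+,\lambda}$-formula, but because it was built from the subformulas of $\sigma$ and their universal closures, and because the game quantifier~(1) is itself expressible in the fragment, one checks that it lies in (a universal fragment of) $\mathcal{L}_{\omega_1, G}$; concretely, the $\lambda$-small conjunctions appearing in it can be taken to be countable, since the fragment $\mathcal{F}$ is countably generated, and the only quantifiers surviving are universal, so one may rewrite the result as an honest universal sentence of $\mathcal{L}_{\omega_1, G}$ (possibly using a game formula of shape~(1) in which all existential moves of the second player have been suppressed, which is genuinely universal). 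The equivalence with $\sigma$ holds because both define the same subobject in the classifying topos, hence the same class of models, and $\mathbb{T}'$ is conservative over $\mathbb{T}$ on $\mathcal{F}$-sentences.

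The main obstacle I anticipate is the precise form of the definability lemma in the third paragraph's middle step: one needs that a subobject of a representable in $\Sets^{Mod_\lambda(\mathbb{T})}$ which is closed under the distinguished ``substructure'' class of morphisms is definable by a universal $\lambda$-coherent formula — the infinitary, $\lambda$-coherent analogue of the classical result, carried out with enough care that the resulting formula stays inside the countable fragment when one specializes to $\lambda=\omega_1$ and $\mathcal{L}_{\omega_1, G}$. Making the ``distinguished class of morphisms corresponds exactly to substructure embeddings'' correspondence precise, and checking that closure under it is detected by the topos, is where the real work lies; the rest is bookkeeping with the classifying-topos machinery already set up in Theorem~\ref{lk}.
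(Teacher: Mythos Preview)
Your proposal has the right instinct --- use Theorem~\ref{lk} to realize $\sigma$ as a subobject in the presheaf topos and read off a syntactic description --- but the core mechanism is set up incorrectly, and the key trick of the paper is missing.

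First, a direction problem. In $\Sets^{Mod_\lambda(\mathbb{T})}$, a subobject of $ev((\mathbf{x},\top))$ is a \emph{subfunctor}: for every morphism $f:M\to N$ in $Mod_\lambda(\mathbb{T})$ one needs $f(S^M)\subseteq S^N$, i.e.\ \emph{upward} preservation along the morphisms of the indexing category. A sentence preserved under substructures is preserved \emph{downward} along embeddings, so $\{M\mapsto\sem{\sigma}^M\}$ is not a subfunctor at all, and there is nothing for the classifying-topos machinery to grab. The paper handles this by passing to the dual statement (upward preservation along embeddings $\Rightarrow$ existential), proving that, and then negating.

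Second, Morleyizing is the wrong expansion. Morleyization forces homomorphisms of $\mathbb{T}$-models to be \emph{elementary} embeddings, so your ``distinguished class of morphisms'' is much smaller than the class of substructure embeddings, and stability under it is too weak a hypothesis. The paper's device is different and decisive: for each relation $R$ (including equality) adjoin a complement predicate $R^*$ with axioms $R(\mathbf{x})\wedge R^*(\mathbf{x})\vdash\bot$ and $\top\vdash R(\mathbf{x})\vee R^*(\mathbf{x})$. Then \emph{every} homomorphism of models of the expanded theory is an embedding in the original language, so the indexing category of the presheaf topos is exactly (countable models, embeddings), and upward preservation along embeddings is precisely the subfunctor condition.

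With that in place no extra ``definability lemma'' is needed: once $\sem{\sigma}$ is a genuine subobject of the terminal in $\Sets^{Mod_{\omega_1}(\mathbb{T})}\simeq\Sh{\mathcal{C}_{\mathbb{T}'},\tau}$, it is automatically a union of representable subobjects, hence definable by an $\omega_1$-coherent formula $\bigvee_{j<\omega_1}\exists_{i<\omega}x_i\bigwedge_i\psi^j_i$. Replacing each $R^*$ by $\neg R$ and reorganizing yields an existential Vaught sentence in the original signature. Your hoped-for lemma (``stable under a distinguished subclass of arrows $\Rightarrow$ universal definable'') is neither available nor needed.

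Finally, two global steps you omit are essential and not bookkeeping. The identification in Theorem~\ref{lk} with $\kappa=\omega_1$ requires $\omega_1^{<\omega_1}=\omega_1$, i.e.\ CH; the paper first works under CH and then removes it by a $<\omega_1$-distributive forcing collapsing $2^\omega$ to $\omega_1$, using that the validity of $\phi\leftrightarrow\psi$ is $\Pi_1$ and hence absolute downward. And the equivalence obtained from the topos is only over \emph{countable} models; the extension to all models uses the approximations $\mathcal{A}(\phi\leftrightarrow\psi,\alpha)\in\mathcal{L}_{\infty,\omega}$ together with Shoenfield--L\'evy absoluteness. Your ``descent'' paragraph does not supply either of these.
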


\begin{proof}
We will prove the dual statement, namely, that sentences preserved upwards along embeddings are equivalent to existential sentences. Let $\phi$ be a sentence of $\mathcal{L}_{\omega_1, G}$ which is preserved by embeddings, and assume without loss of generality that the language is relational. For each relation $R$, including equality, in the signature (which we can assume countable by passing to a fragment generated by $\phi$), add a new relation $R^*$ together with the theory $\mathbb{T}$ consisting of the following axioms:

$$R(\mathbf{x}) \wedge R^*(\mathbf{x}) \vdash_{\mathbf{x}} \bot$$
$$\top \vdash_{\mathbf{x}} R(\mathbf{x}) \vee R^*(\mathbf{x})$$

The homomorphisms in the new language will then correspond to embeddings and by hypothesis $\phi$ is preserved. Assume first that the continuum hypothesis holds. Then, by Theorem \ref{lk} the $\omega_1$-classifying topos of $\mathbb{T}$ is the topos $\mathcal{S}et^{Mod_{\omega_1}(\mathbb{T})}$ of presheaves over the subcategory of (at most) countable models and embeddings. The interpretation of $\phi$ in each such model $M$, say, $[[\phi]]^M$, defines therefore a subobject of $S \rightarrowtail [[(\{\}, \top)]]$ in the topos. Since the embedding $\mathcal{C}_{\mathbb{T}'} \hookrightarrow \mathcal{S}et^{Mod_{\lambda}(\mathbb{T})}$ can be identified with Yoneda embedding $\mathcal{C}_{\mathbb{T}'} \hookrightarrow \mathcal{S}h(\mathcal{C}_{\mathbb{T}'}, \tau)$ with the $\omega_1$-coherent topology, $S$ corresponds to a union of representable subobjects, and so it is the interpretation of some $\omega_1$-coherent formula of the form $\bigvee_{j<\omega_1} \exists_{i<\omega} x_i \bigwedge_{i<\omega} \psi_i^j$, where the $\psi_i^j$ are atomic formulas with free variables amongst $x_0, ..., x_{i-1}$. But this formula can be rewritten as the following formula $\psi$ in the original signature:

$$\bigvee_{j_0<\omega} \exists x_0 \bigvee_{j_1<\omega} \exists x_1 \qquad ... \qquad \bigwedge_{i<\omega} \phi_i^{j_0...j_i}$$ 
\noindent
where we identify each $j<\omega_1$ with the subset $\{j_0, j_1, ...\} \subseteq \omega$ and each $\phi_i^{j_0...j_i}$ is obtained by simply replacing in $\bigwedge_{k<i}\bigvee_{j|_{i+1}=\{j_0, ..., j_i\}}\psi_k^j$ each relation symbol $R^*$ with $\neg R$ and reducing the size of the disjunctions to $\omega$ (this is possible since there are at most countable many $\psi_i^j$). The resulting formula $\psi$ is now in $\mathcal{L}_{\omega_1, G}$ (is in fact a Vaught sentence), is clearly existential, and its interpretation coincides with that of $\phi$ in all countable models. We claim that it is actually equivalent to $\phi$. Indeed, the formula $\phi \leftrightarrow \psi$ admits an approximation by formulas in $\mathcal{L}_{\infty, \omega}$ (see \cite{burgess}), i.e., there are formulas $\mathcal{A}(\phi \leftrightarrow \psi, \alpha)$ in $\mathcal{L}_{\infty, \omega}$ such that $\phi \leftrightarrow \psi$ is equivalent to the formal conjunction $\bigwedge_{\alpha \in Ord}\mathcal{A}(\phi \leftrightarrow \psi, \alpha)$. Hence, if $\phi \leftrightarrow \psi$ was not valid, we would have:

$$\exists M \exists \alpha M \nvDash \mathcal{A}(\phi \leftrightarrow \psi, \alpha).$$
\noindent
This is a $\Sigma_1$ sentence, so that since $\phi \leftrightarrow \psi$ is in $H(\omega_1)$, the set hereditarily of cardinality at most countable, by Shoenfield-Levy's theorem we can assume that $M$ and $\alpha$ are in $H(\omega_1)$, which would contradict our previous result.

  Suppose now that the value of the continuum is arbitrary. Consider the forcing extension $V[G]$ in which we collapse $2^{\omega}$ to $\omega_1$. Since this forcing is $<\omega_1$-distributive, formulas of $\mathcal{L}_{\omega_1, G}$ and their countable models and embeddings remain unchanged (we assume they are properly coded). By what we have just proved, $\phi$ is equivalent in $V[G]$ to an existential formula $\psi$, and since the validity of $\phi \leftrightarrow \psi$ is a $\Pi_1$ formula, it will be true in the ground model, which finishes our proof.
\end{proof}

\begin{thm}\label{positive}
A sentence of $\mathcal{L}_{\omega_1, G}$ which is preserved under homomorphic images is equivalent to a positive sentence of $\mathcal{L}_{\omega_1, G}$.
\end{thm}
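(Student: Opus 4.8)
The plan is to mimic the structure of the proof of Theorem \ref{universal}, replacing the encoding of embeddings by an encoding of surjective homomorphisms and the target class of formulas (existential) by the target class of formulas for which no atomic subformula occurs negated. First I would dualize, proving that a sentence preserved under the source direction of surjective homomorphisms — i.e. under preimages along quotient maps — is equivalent to a sentence in negation normal form in which every atomic formula occurs only positively (a \emph{positive} sentence); this is the form that corresponds, on the categorical side, to a subobject of the terminal object classified without using the subobject-complementation available from the $R^*$ relations. The point of introducing the auxiliary relations $R^*$ together with the axioms $R(\mathbf x)\wedge R^*(\mathbf x)\vdash_{\mathbf x}\bot$ and $\top\vdash_{\mathbf x}R(\mathbf x)\vee R^*(\mathbf x)$ was precisely to force homomorphisms of the expanded language to be embeddings; for Lyndon's theorem one instead wants homomorphisms of the expanded language to be exactly the surjective ones, so the auxiliary data should encode surjectivity rather than reflection of relations. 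A standard device is to add a unary predicate for the image together with axioms making every element lie in the image (so that in a model of $\mathbb T$ all elements are "old"), combined with a coding that makes an expanded homomorphism correspond to a surjection of the underlying reducts; I would set this up so that, after passing to the fragment generated by $\phi$ and assuming the signature countable, the models of $\mathbb T$ are exactly the countable structures and the homomorphisms are exactly the surjections.

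Next, assuming the continuum hypothesis, I would invoke Theorem \ref{lk}: the $\omega_1$-classifying topos of $\mathbb T$ is the presheaf topos $\mathcal Set^{Mod_{\omega_1}(\mathbb T)}$ on the category of (at most) countable models with surjective homomorphisms, and the syntactic category embeds by the evaluation functor, which under the identification with the Yoneda embedding into $\mathcal Sh(\mathcal C_{\mathbb T'},\tau)$ for the $\omega_1$-coherent topology exhibits the interpretation $[[\phi]]$ as a subobject $S\rightarrowtail[[(\{\},\top)]]$. As before, because the topology is subcanonical and the $\omega_1$-coherent covers are the relevant ones, $S$ is a union of representable subobjects, hence the interpretation of some $\omega_1$-coherent formula $\bigvee_{j<\omega_1}\exists_{i<\omega}x_i\bigwedge_{i<\omega}\psi_i^j$; the crucial difference from Theorem \ref{universal} is that now the $\psi_i^j$, when translated back to the original signature, involve only the positive atomic formulas — there is no $R^*$ to rewrite as $\neg R$ — so the resulting Vaught sentence $\psi$ of $\mathcal L_{\omega_1,G}$ is \emph{positive}. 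One then argues, exactly as in Theorem \ref{universal}, that $\psi$ agrees with $\phi$ on all countable models; that the biconditional $\phi\leftrightarrow\psi$ is approximable by $\mathcal L_{\infty,\omega}$ formulas so that a failure of validity would be witnessed by a $\Sigma_1$ statement about some $M,\alpha$; and that by Shoenfield–Levy absoluteness such a witness may be taken in $H(\omega_1)$, contradicting what was just proved there. Finally, dropping CH, one passes to a forcing extension collapsing $2^\omega$ to $\omega_1$, which is $<\omega_1$-distributive and hence leaves countable structures, their $\mathcal L_{\omega_1,G}$-theories and homomorphisms untouched; the equivalence $\phi\leftrightarrow\psi$ holds there, and being $\Pi_1$ it descends to the ground model.

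The main obstacle I expect is the first step: finding an auxiliary expansion of the signature, together with a $\kappa$-coherent (here $\omega_1$-coherent) theory $\mathbb T$, such that (a) homomorphisms between models of $\mathbb T$ correspond precisely to surjective homomorphisms of the original structures, (b) the category of models of $\mathbb T$ is $\omega_1$-accessible so that Theorem \ref{lk} applies with $\lambda=\omega_1$, and (c) the translation back to the original signature of the $\omega_1$-coherent formula produced on the classifying-topos side genuinely yields a positive sentence rather than merely an existential one. Encoding surjectivity is subtler than encoding reflection of relations, because surjectivity is a condition on the underlying map of sets and is not obviously first-order in a way compatible with the coherent fragment; one likely needs to work with the image predicate and a careful choice of axioms ensuring that in the syntactic category the would-be "missing" elements are collapsed, and to check that this does not inadvertently reintroduce the ability to express negated atomics. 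A secondary, more routine point to verify is that the rewriting of $\bigvee_{j<\omega_1}\exists_{i<\omega}x_i\bigwedge_{i<\omega}\psi_i^j$ into a genuine $\mathcal L_{\omega_1,G}$ (Vaught) sentence — identifying each $j<\omega_1$ with a subset of $\omega$ and contracting the disjunctions to size $\omega$ using that there are only countably many $\psi_i^j$ — goes through unchanged in the positive setting, as it should.
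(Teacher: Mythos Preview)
Your overall strategy is right in spirit, but there is a genuine gap at exactly the point you flag as the ``main obstacle,'' and it is deeper than you suggest. The image-predicate idea does not encode surjectivity: adding a unary $I$ with $\top\vdash_x I(x)$ forces nothing on homomorphisms, since every element already satisfies $I$ in both source and target. The paper's device is instead to add countably many \emph{constants} $(c_i)_{i<\omega}$ together with the axiom $\top\vdash_y\bigvee_{i<\omega}y=c_i$; then every element of a model is named, and since homomorphisms preserve constants they are automatically surjective.

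The more serious issue is what happens afterwards. With this encoding the $\omega_1$-coherent formula $\psi_0$ produced by the classifying-topos argument is a formula \emph{in the expanded language}, built from atomics that may mention the $c_i$, and the equivalence $\phi\leftrightarrow\psi_0(\mathbf c)$ holds only modulo the surjectivity axiom. You cannot simply ``translate back'' and get a positive $\mathcal L_{\omega_1,G}$-sentence in the original signature: one must first replace the constants by variables and reintroduce the surjectivity hypothesis, arriving at $\exists\mathbf x\bigl(\forall y\bigvee_{i<\omega}y=x_i\wedge\psi_0(\mathbf x)\bigr)$. This sentence is positive but has an $\omega$-block of existential quantifiers and so lies outside $\mathcal L_{\omega_1,G}$. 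The paper then takes a genuine detour: rewrite the sentence as a second-order statement $\exists_{i<\omega}R_i[\cdots]$ with unary $R_i$ coding the constants, observe that this exhibits $\phi$ as a projective class over $\mathcal L_{\omega_1,G}$ in which every original atomic occurs positively, and invoke Vaught's results (and a theorem of Makkai) to reduce first to a projective class over $\mathcal L_{\omega_1,\omega}$ and then to a positive Vaught sentence. None of this elimination-and-reconstruction step is present in your proposal, and without it the argument does not close.
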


\begin{proof}
We proceed as before for this case as well; in particular, it is enough to prove that, assuming the continuum hypothesis, every sentence $\phi$ of $\mathcal{L}_{\omega_1, G}$ which is preserved under homomorphic images between countable models is equivalent, on countable models, to a positive sentence $\psi$ of $\mathcal{L}_{\omega_1, G}$. Then we can prove the general case as we did in the proof of Theorem \ref{universal}.
  Extend the signature by adding countable many constant symbols $(c_i)_{i<\omega}$ and a relation symbol $S$,  and consider the theory $\mathbb{T}$ axiomatized by the sequent:

$$\top \vdash_{y} \bigvee_{i<\omega}y=c_i$$

This is an $\omega$-coherent theory and its homomorphisms are evidently surjective, so that $\phi$ is preserved. In an entirely similar way as with the proof of Theorem \ref{universal}, we deduce then that there is a $\omega_1$-coherent formula $\psi_0$ of the form $\bigvee_{j<\omega_1} \exists_{i<\omega} x_i \bigwedge_{i<\omega} \psi_i^j$, where the $\psi_i^j$ are atomic formulas, and which is equivalent to $\phi$ in all countable models of $\mathbb{T}$; that is:

$$\forall y \bigvee_{i<\omega} y=c_i \vDash \phi  \leftrightarrow \psi_0(\mathbf{c})$$
\noindent
where $\mathbf{c}=c_0c_1 ...$, or:

$$\models \forall \mathbf{x} \left[\forall y \bigvee_{i<\omega} y=x_i \to (\phi  \leftrightarrow \psi_0(\mathbf{x}))\right] \qquad (3)$$
\noindent
where $\mathbf{x}=x_0x_1 ...$. Now (3) readily implies that $\phi \models \forall \mathbf{x} (\forall y \bigvee_{i<\omega} y=x_i \to \psi_0(\mathbf{x}))$. In countable models, this latter sentence $\forall \mathbf{x} (\forall y \bigvee_{i<\omega} y=x_i \to \psi_0(\mathbf{x}))$ entails:

$$\exists \mathbf{x} \left(\forall y \bigvee_{i<\omega} y=x_i \wedge \psi_0(\mathbf{x})\right) \qquad (4)$$
\noindent
since that sentence and the negation of (4) implies $\forall \mathbf{x} \neg (\forall y \bigvee_{i<\omega} y=x_i)$, which is only true in uncountable models. On the other hand, using (3) we see that (4) clearly implies $\phi$ in all models. Thus, we have that in all countable models, $\phi$ is equivalent to the sentence (4). This sentence is clearly positive, but it does not belong to $\mathcal{L}_{\omega_1, G}$. To find an appropriate sentence in $\mathcal{L}_{\omega_1, G}$, note that (4) is equivalent to the following second-order sentence:

$$\exists_{i<\omega}R_i \left[\forall x \bigvee_{i<\omega}R_i(x) \wedge \bigwedge_{i<\omega}(R_i(x) \wedge R_i(y) \to x=y) \wedge \exists \mathbf{x}\left(\bigwedge_{i<\omega}R_i(x_i) \wedge \psi_0(\mathbf{x})\right)\right] \qquad (5)$$
\noindent
where the $(R_i)_{i<\omega}$ are unary relations whose sole purpose is to code the constants $(c_i)_{i<\omega}$, i.e., they are such that the intended interpretation of $R_i(x)$ is $c_i$. Now (5) expresses $\phi$ as a projective class over $\mathcal{L}_{\omega_1, G}$, since the conjunct $\exists \mathbf{x}(\bigwedge_{i<\omega}R_i(x_i) \wedge \psi_0(\mathbf{x}))$ can clearly be rewritten, as we did in the proof of Theorem \ref{universal}, as a sentence of $\mathcal{L}_{\omega_1, G}$. Note also that in (5) (or rather, in its rewritten form in $\mathcal{L}_{\omega_1, G}$) every atomic formula not involving the $R_i$ which are quantified, appears positively in negation normal form. It follows by results of Vaught from \cite{vaught} that the matrix of (5) (i.e., the formula after the existentially quantified $R_i$) is equivalent in turn to a second-order assertion of the form $\exists_{j<\omega} S_j \theta$, where $\theta$ is in $\mathcal{L}_{\omega_1, \omega}$ and has the property that every atomic formula appears positively in negation normal form. Hence, (5) actually expresses $\phi$ as a projective class over $\mathcal{L}_{\omega_1, \omega}$. It follows also from \cite{vaught} that this resulting second-order assertion is equivalent over countable models to a Vaught sentence $\psi$ in which every atomic formula appears positively in negation normal form\footnote{Indeed, see the comments in \cite{vaught} starting from the last paragraph of page 18. Alternatively, by a result of Makkai, a Vaught sentence preserved by homomorphic images is equivalent to a positive Vaught sentence.}, i.e., a positive sentence of $\mathcal{L}_{\omega_1, G}$. This finishes the proof.
\end{proof}

\begin{cor}
A sentence of $\mathcal{L}_{\omega_1, \omega}$ preserved under substructure (resp. homomorphic image) is equivalent to a universal (resp. positive) sentence of $\mathcal{L}_{\omega_1, \omega}$.
\end{cor}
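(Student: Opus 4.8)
The Corollary is the instance of Theorems \ref{universal} and \ref{positive} in which the sentence already lies in the sublogic $\mathcal{L}_{\omega_1,\omega}\subseteq\mathcal{L}_{\omega_1,G}$; what must be added is that, in that case, the equivalent universal (resp. positive) sentence produced can be found inside $\mathcal{L}_{\omega_1,\omega}$ itself. So let $\phi\in\mathcal{L}_{\omega_1,\omega}$ be preserved under substructures (resp. homomorphic images). After the reduction used in the proof of Theorem \ref{universal} (in the substructure case, pass to the dual, so that one deals with a sentence preserved upwards along embeddings; the homomorphism case is already in the form of Theorem \ref{positive}), running that proof --- under $CH$ and through Theorem \ref{lk} --- produces an existential (resp. positive) Vaught sentence $\psi$ of $\mathcal{L}_{\omega_1,G}$ equivalent to $\phi$ (up to the dualization) on all countable structures. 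The point still to establish is that $\psi$ a priori uses the game quantifier genuinely, hence lies in $\mathcal{L}_{\omega_1,G}$ and not obviously in $\mathcal{L}_{\omega_1,\omega}$; I would remove this by a boundedness argument.

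Code countable structures with universe $\omega$ in the usual Polish space. Since $\phi\in\mathcal{L}_{\omega_1,\omega}$, the class $\mathrm{Mod}(\phi)$ is Borel: atomic formulas are clopen, the infinitary connectives give countable Boolean operations, and over a structure with universe $\omega$ the quantifier $\exists x$ becomes the countable disjunction $\bigvee_{n<\omega}$ over elements. Hence $\mathrm{Mod}(\neg\psi)$ is Borel, in particular $\boldsymbol{\Sigma}^1_1$. Being a Vaught (closed-game) sentence, $\psi$ carries the $\mathcal{L}_{\infty,\omega}$-approximations $\psi_\alpha$ recalled in \cite{burgess}; these decrease, are conjoined at limits, and on countable structures $\psi$ is equivalent to $\bigwedge_{\alpha<\omega_1}\psi_\alpha$, where each $\psi_\alpha$ with $\alpha<\omega_1$ is an honest $\mathcal{L}_{\omega_1,\omega}$ sentence of the same syntactic shape as $\psi$ --- only existential quantifiers, resp. only positively occurring atoms, since the approximation process introduces no negations --- because truncating the game at a countable depth yields a well-founded formula tree of countable height with countable conjunctions and disjunctions and one quantifier per step. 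Moreover $\neg\psi$ asserts precisely that the tree of finite partial plays of the game for $\psi$ along which its winning condition has not yet failed is well-founded, and sending a structure to that tree exhibits the Borel set $\mathrm{Mod}(\neg\psi)$ as a $\boldsymbol{\Sigma}^1_1$ set of well-founded trees; the boundedness theorem then supplies $\alpha_0<\omega_1$ bounding all their ranks, whence $\psi$ is equivalent to $\psi_{\alpha_0}$ on countable structures. Undoing the dualization, $\phi$ is equivalent on countable structures to a universal (resp. positive) sentence $\chi$ of $\mathcal{L}_{\omega_1,\omega}$.

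It remains to pass from countable structures to all structures. As $\phi$ and $\chi$ both lie in $\mathcal{L}_{\omega_1,\omega}$, so does $\phi\leftrightarrow\chi$, and by the downward L\"owenheim--Skolem property of $\mathcal{L}_{\omega_1,\omega}$ (a consistent $\mathcal{L}_{\omega_1,\omega}$ sentence has a countable model) a sentence of $\mathcal{L}_{\omega_1,\omega}$ valid in all countable structures is valid; so $\phi$ and $\chi$ are equivalent. The hypothesis $CH$, used only through Theorem \ref{lk}, is discharged exactly as at the end of the proof of Theorem \ref{universal}: the statement that $\phi\leftrightarrow\chi$ is valid is $\Pi_1$ over $H(\omega_1)$, hence absolute to a forcing extension collapsing $2^\omega$ to $\omega_1$. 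This route also gives a proof of the infinitary \L{}o\'s-Tarski and Lyndon theorems for $\mathcal{L}_{\omega_1,\omega}$ different from those of \cite{malitz}, \cite{le}, \cite{keisler}.

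I expect the main obstacle to be the boundedness step. One must organize the approximations $\psi_\alpha$ so as to check at once that (i) for $\alpha<\omega_1$ each $\psi_\alpha$ is a genuine $\mathcal{L}_{\omega_1,\omega}$ formula of the intended shape --- only existential quantifiers, and, in the homomorphism case, no negated atoms --- and (ii) the pertinent well-founded part is exactly the Borel set $\mathrm{Mod}(\neg\psi)$, so that the boundedness theorem for $\boldsymbol{\Sigma}^1_1$ sets of well-founded trees applies. Point (ii) is where the assumption that $\phi$ lies in $\mathcal{L}_{\omega_1,\omega}$, rather than merely in $\mathcal{L}_{\omega_1,G}$, is actually used; the rest is bookkeeping already present in the proofs of Theorems \ref{universal} and \ref{positive}.
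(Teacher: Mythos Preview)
Your approach is correct and rests on the same core idea as the paper's --- truncating the Vaught sentence via its $\mathcal{L}_{\infty,\omega}$-approximations at some countable ordinal --- but you unpack the boundedness argument by hand, whereas the paper simply invokes Vaught's covering theorem as a black box. The paper's proof is accordingly much shorter: it uses Theorems \ref{universal} and \ref{positive} directly (not re-run under $CH$), so that $\phi$ is already equivalent on \emph{all} models to a universal (resp.\ positive) Vaught sentence $\Phi$; then, since $\Phi \to \phi$ with $\phi \in \mathcal{L}_{\omega_1,\omega}$, Vaught's covering theorem yields a countable $\alpha$ with $\mathcal{A}(\Phi,\alpha) \to \phi$, and since also $\phi \to \Phi \to \mathcal{A}(\Phi,\alpha)$ one is done. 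Your route --- re-running the proofs under $CH$ to get equivalence only on countable models, performing the $\boldsymbol{\Sigma}^1_1$-boundedness step explicitly, then extending to all models via L\"owenheim--Skolem, and finally removing $CH$ by absoluteness --- is essentially a re-proof of the covering theorem inline together with a re-derivation of work already packaged inside Theorems \ref{universal} and \ref{positive}. The gain is self-containment; the cost is length and the need to verify details (that the approximations $\psi_\alpha$ inherit the existential/positive shape, and that the tree associated to $\neg\psi$ is the right object for boundedness) that the paper delegates to \cite{vaught}.
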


\begin{proof}
Since $\mathcal{L}_{\omega_1, \omega} \subset \mathcal{L}_{\omega_1, G}$, such a sentence $\phi$ is equivalent to a universal (resp. positive) Vaught sentence $\Phi$. By Vaught's covering theorem (see, e.g., \cite{vaught}), since $\Phi \to \phi$, there is a countable ordinal $\alpha$ such that $\mathcal{A}(\Phi, \alpha) \to \phi$. Thus, $\phi$ is equivalent to the sentence $\mathcal{A}(\Phi, \alpha)$ which is in $\mathcal{L}_{\omega_1, \omega}$ and is universal (resp. positive).
\end{proof}

\section{A definability property equivalent to Vop\v{e}nka principle}

As a final application of Theorem \ref{lk}, we now prove the following:

\begin{thm}
Let $\Sigma$ be a signature and consider the category of $\Sigma$-structures and homomorphisms. Suppose that for each structure $M$ there is a distinguished subset $S^M$ which is preserved by all homomophisms. Then the following are equivalent:

\begin{enumerate}
\item Vop\v{e}nka principle
\item The subsets $S^M$ are definable by an infinitary coherent formula. That is, there is a formula $\phi$ of the form $\bigvee_{j<\lambda} \exists_{i<\kappa} x_i \bigwedge_{i<\kappa} \psi_i^j$, where the $\psi_i^j$ are atomic formulas, such that $[[\phi]]^M=S^M$ for all $\Sigma$-structures $M$.
\end{enumerate}
\end{thm}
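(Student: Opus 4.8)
\emph{The plan} is to prove $(1)\Rightarrow(2)$ and, contrapositively, $\lnot(1)\Rightarrow\lnot(2)$, the substance lying in the first implication. First I would record the elementary reformulations. Writing $S$ for the assignment $M\mapsto S^{M}$, the hypothesis that each $S^{M}$ is preserved by homomorphisms says exactly that $S$ is a subfunctor of the forgetful functor $U\colon\mathbf{Str}(\Sigma)\to\Sets$ (I take $S^{M}\subseteq|M|$; the case $S^{M}\subseteq|M|^{n}$ is identical, adjoining $n$ fresh constants instead of one below). Equivalently, its category of elements $\mathcal{S}$ — objects $(M,a)$ with $a\in S^{M}$, morphisms $f\colon M\to N$ with $f(a)=b$ — is a full subcategory of $\mathbf{Str}(\Sigma^{+})$, where $\Sigma^{+}=\Sigma\cup\{c\}$ with $c$ a fresh constant; and $\mathcal{S}$ is \emph{upward closed} (if $(M,a)\in\mathcal{S}$ and $(M,a)\to(N,b)$ then $(N,b)\in\mathcal{S}$), hence closed under directed colimits, indeed under all nonempty colimits, computed in the locally presentable category $\mathbf{Str}(\Sigma^{+})$. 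I would also invoke the standard dictionary between infinitary coherent formulas and finitely presented structures: a formula $\exists_{i<\kappa}x_{i}\bigwedge_{i<\kappa}\psi_{i}$ with a distinguished free variable presents a pointed $\Sigma$-structure $(P,p)$ with $p\in[[\,\exists_{i<\kappa}x_{i}\bigwedge_{i<\kappa}\psi_{i}\,]]^{P}$ and with $a\in[[\,\exists_{i<\kappa}x_{i}\bigwedge_{i<\kappa}\psi_{i}\,]]^{M}$ iff some homomorphism $h\colon P\to M$ has $h(p)=a$; conversely a pointed structure presentable by fewer than $\kappa$ generators and relations gives such a formula. Consequently, $S$ is defined by a formula of the displayed disjunctive form iff there is a \emph{set} $\{(P_{j},p_{j})\}_{j<\lambda}$ of pointed $\Sigma$-structures, each presentable by fewer than $\kappa$ generators and relations for a fixed $\kappa$, with each $p_{j}\in S^{P_{j}}$, such that $S^{M}$ is the union over $j$ of the images of the maps $\operatorname{Hom}(P_{j},M)\to|M|$, $h\mapsto h(p_{j})$.

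$(1)\Rightarrow(2)$: Assume Vop\v{e}nka's principle. It is a classical consequence of it (see Ad\'amek and Rosick\'y's monograph on locally presentable and accessible categories) that every full subcategory of a locally presentable category closed under directed colimits is accessible and accessibly embedded; hence $\mathcal{S}$ is such. Using that Vop\v{e}nka's principle furnishes arbitrarily large regular cardinals $\kappa$ (with $\kappa^{<\kappa}=\kappa$ should $\Sigma$ have infinitary symbols) together with the standard results on raising the degree of accessibility of an accessible category and of an accessible embedding, fix a regular $\kappa>|\Sigma|$ such that $\mathcal{S}$ is $\kappa$-accessible and such that the $\kappa$-presentable objects of $\mathcal{S}$ are precisely the objects of $\mathcal{S}$ that are $\kappa$-presentable in $\mathbf{Str}(\Sigma^{+})$. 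Let $\{(P_{j},p_{j})\}_{j<\lambda}$ be a set of representatives of the isomorphism classes of $\kappa$-presentable objects of $\mathcal{S}$; then each $P_{j}$ is a $\kappa$-presentable $\Sigma^{+}$-structure, hence presentable by fewer than $\kappa$ generators and relations, and $p_{j}\in S^{P_{j}}$. Let $\phi_{j}$ be the coherent formula presenting $(P_{j},p_{j})$ and set $\phi:=\bigvee_{j<\lambda}\phi_{j}$, a formula of the form $\bigvee_{j<\lambda}\exists_{i<\kappa}x_{i}\bigwedge_{i<\kappa}\psi_{i}^{j}$. For every $\Sigma$-structure $M$ one has $[[\phi]]^{M}=S^{M}$: the inclusion $\subseteq$ holds because $p_{j}\in S^{P_{j}}$ and $S$ is preserved by the witnessing homomorphism; for $\supseteq$, if $a\in S^{M}$ then $(M,a)\in\mathcal{S}$ is a $\kappa$-directed colimit of objects $(P_{j_{k}},p_{j_{k}})$ of $\mathcal{S}$ (which we may take among the $(P_{j},p_{j})$), and every colimit leg $\sigma_{k}\colon P_{j_{k}}\to M$, being a morphism of $\mathcal{S}$, satisfies $\sigma_{k}(p_{j_{k}})=a$, so $a\in[[\phi_{j_{k}}]]^{M}\subseteq[[\phi]]^{M}$. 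Thus $S$ is definable by an infinitary coherent formula.

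$(2)\Rightarrow(1)$ (contrapositive): If Vop\v{e}nka's principle fails, there are a signature $\Sigma$ and a proper class $\{A_{i}\}_{i\in\mathrm{Ord}}$ of nonempty $\Sigma$-structures with $\operatorname{Hom}(A_{i},A_{j})=\varnothing$ whenever $i\ne j$. Choose $a_{i}\in A_{i}$ and put $S^{M}:=\{\,b\in|M|:\exists i\;\exists h\in\operatorname{Hom}(A_{i},M)\text{ with }h(a_{i})=b\,\}$; this $S$ is a subfunctor of $U$, hence a homomorphism-preserved family, and I claim it is not definable by an infinitary coherent formula (so that $(2)$ fails). Were $S$ defined by some $\bigvee_{j<\lambda}\exists_{i<\kappa}x_{i}\bigwedge_{i<\kappa}\psi_{i}^{j}$ with $j$-th disjunct presented by $(P_{j},p_{j})$, then $p_{j}\in S^{P_{j}}$; and for each $i$ we have $a_{i}\in S^{A_{i}}$ (take $h=\mathrm{id}$), so there are $j(i)<\lambda$ and $g_{i}\in\operatorname{Hom}(P_{j(i)},A_{i})$ with $g_{i}(p_{j(i)})=a_{i}$. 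Since $\lambda$ is a set and $\mathrm{Ord}$ a proper class, fix $j_{0}$ with $I_{0}:=\{i:j(i)=j_{0}\}$ a proper class; since $p_{j_{0}}\in S^{P_{j_{0}}}$, pick $i^{*}$ and $h^{*}\in\operatorname{Hom}(A_{i^{*}},P_{j_{0}})$ with $h^{*}(a_{i^{*}})=p_{j_{0}}$; then for any $i\in I_{0}$ with $i\ne i^{*}$ the composite $g_{i}\circ h^{*}$ lies in $\operatorname{Hom}(A_{i^{*}},A_{i})$, a contradiction.

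\emph{The main obstacle} is the accessibility step in $(1)\Rightarrow(2)$ and the cardinal bookkeeping it triggers: one must pass from ``$\mathcal{S}$ is accessible'' to a single $\kappa$ for which $\kappa$-presentability inside $\mathcal{S}$ agrees with $\kappa$-presentability in $\mathbf{Str}(\Sigma^{+})$, so that the presenting structures $P_{j}$ are genuinely small and yield bona fide $\kappa$-coherent disjuncts. This plays, in the present class-sized setting, the role that the identification of the $\lambda$-classifying topos with a presheaf topos on $\lambda$-presentable models (Theorem~\ref{lk}) played in the proofs of Theorems~\ref{universal} and~\ref{positive}: in each case definability is reduced to a presentability statement about a category of models. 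By contrast $(2)\Rightarrow(1)$ is a short pigeonhole argument once the formula--structure dictionary is in place.
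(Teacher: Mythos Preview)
Your argument is correct, and it proceeds along the same conceptual arc as the paper's---both directions ultimately rest on the accessibility of the subfunctor $S$---but the execution differs in interesting ways.

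For $(1)\Rightarrow(2)$, the paper invokes its Theorem~\ref{lk}: it picks an inaccessible $\lambda$ with $S$ $\lambda$-accessible, identifies the $\lambda$-classifying topos of the empty $\Sigma$-theory with $\Sets^{Mod_{\lambda}(\mathbb{T})}$, and reads off the formula from the fact that $S$, as a subobject in that presheaf topos, is a union of representable subobjects. You bypass the topos-theoretic identification entirely, working instead with the category of elements $\mathcal{S}\subseteq\mathbf{Str}(\Sigma^{+})$ and the direct dictionary between $\kappa$-presentable pointed structures and $\kappa$-coherent existential formulas. Your route is more elementary and self-contained; the paper's route is there to exhibit Theorem~\ref{lk} as the common engine behind all the results in the paper. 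Substantively they amount to the same thing: a representable subobject of the evaluation functor \emph{is} a $\kappa$-presentable pointed structure, so ``union of representables'' and ``disjunction over $\kappa$-presentable $(P_{j},p_{j})$'' are two names for one construction.

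For $\lnot(1)\Rightarrow\lnot(2)$, the paper takes a large rigid class $\mathcal{C}$, sets $S^{M}=\bigcup_{N\in\mathcal{C}}\bigcup_{f\colon N\to M}\mathrm{im}(f)$, and cites Ad\'amek--Rosick\'y for the fact that this subfunctor is not accessible (hence not definable). You instead take a large discrete family $\{A_{i}\}$, pick a single point $a_{i}\in A_{i}$, and run a clean pigeonhole: any putative defining disjunction has a set of disjuncts, some $(P_{j_{0}},p_{j_{0}})$ must service a proper class of the $A_{i}$, and the fact that $p_{j_{0}}\in S^{P_{j_{0}}}$ forces a morphism back from some $A_{i^{*}}$, yielding a forbidden composite $A_{i^{*}}\to A_{i}$. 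Your version is more explicit and avoids the external citation; the paper's is terser but leans on a nontrivial black box.

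One small remark: your appeal to ``Vop\v{e}nka furnishes arbitrarily large regular $\kappa$ with $\kappa^{<\kappa}=\kappa$'' is fine (Vop\v{e}nka gives a proper class of measurables), but for the argument as written ordinary regularity and $\kappa>|\Sigma|$ already suffice when $\Sigma$ is finitary; the extra condition is only needed, as you note parenthetically, if $\Sigma$ has infinitary operation or relation symbols.
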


\begin{proof}
$(2 \implies 1)$ This part is essentially contained in \cite{ar}. If  Vop\v{e}nka principle does not hold, there is a large rigid class of structures $\mathcal{C}$ (see \cite{ar}). Define now:

$$S^M=\bigcup_{N \in \mathcal{C}} \bigcup_{f: N \to M} im(f)$$
\noindent
If $S$ is the subfunctor of the identity defined by the $S^M$, then $S$ is not accessible (see Remark in page 268 of \cite{ar}). Hence, it cannot be definable, as every definable subfunctor (by an infinitary coherent formula) is clearly accessible.

$(1 \implies 2)$ Assuming that Vop\v{e}nka principle holds, the subfunctor $S$ is accessible (since then a subfunctor of an accessible functor must be acccessible). Choose an inaccessible $\lambda$ such that $S$ is $\lambda$-accessible. By Theorem \ref{lk}, the $\lambda$-classifying topos of the empty theory over $\Sigma$ is the presheaf topos $\mathcal{S}et^{Mod_{\lambda}(\mathbb{T})}$. Analogously as to what we did in the previous section, it follows that the subfunctor coincides in all models of size less than $\lambda$ with the interpretation of a $\lambda$-coherent formula. Since this latter is computed in a model $M$ as the $\lambda$-filtered colimit of its value on $\lambda$-presentable models, it follows that the equivalence holds in all $\Sigma$-structures. This concludes the proof.
\end{proof}

\section{Acknowledgements}

This research has been supported through the grant 19-00902S from the Grant Agency of the Czech Republic.

\bibliographystyle{amsalpha}

\renewcommand{\bibname}{References} 

\bibliography{references}



\end{document}